\newtheorem{theorem}{Theorem}[section]
\newtheorem{corollary}[theorem]{Corollary}
\newtheorem{proposition}[theorem]{Proposition}
\newtheorem{lemma}[theorem]{Lemma}
\newenvironment{customthm}[1]
  {\innercustomthm}
  {\endinnercustomthm}
\theoremstyle{definition}
\renewcommand{\d}{\delta}
\newcommand{\D}{\Delta}
\begin{document}

\title{Regularity and Planarity of Token Graphs}

\author{
Walter Carballosa\thanks{CONACYT Research Fellow, M\'exico.
Email: waltercarb@gmail.com} \footnotemark[4]\and
Ruy Fabila-Monroy\thanks{Departamento de Matem\'aticas, CINVESTAV, Mexico,
Email: ruyfabila@math.cinvestav.edu.mx.}
\and
Jes\'us Lea\~nos\thanks{Faculty of Natural Sciences and Mathematics, University of Maribor, Slovenia.
Email: jesus.leanos@gmail.com}
\and
Luis Manuel Rivera\thanks{Unidad Acad\'emica de Matem\'aticas, Universidad Aut\'onoma de Zacatecas, Zac., Mexico.
Email: luismanuel.rivera@gmail.com.}
}
\date{}

\maketitle

\begin{abstract}
Let $G=(V,E)$ be a graph of order $n$ and let $1\le k< n$ be an integer. The \emph{$k$-token graph}  of $G$ is  the
graph whose vertices are all the $k$-subsets of $V$, two of which are adjacent whenever their
symmetric difference is a pair of adjacent vertices in $G$.
In this paper we characterize precisely, for each value of $k$, which graphs have a regular $k$-token graph and which connected graphs have a planar $k$-token graph.
\end{abstract}

{\it Keywords:}  Token graph;  Johnson graph; Regularity; Planarity.\\
{\it AMS Subject Classification Numbers:}    05C10; 05C69.


\section{Introduction.}
Throughout this paper, $G=(V,E)$ denotes a simple graph  of $n$ vertices and $k$ is an integer
with $1 \le k < n$. The \emph{$k$-token graph} $F_k(G)$ of $G$ is  the
graph whose vertices are all the $k$-subsets of $V$, where two such subsets are adjacent whenever their
symmetric difference is a pair of adjacent vertices in $G$. The token graph was
introduced in \cite{FFHH} where some of
their properties were studied. In that paper the authors noted that:
\begin{quote}
``Thus
vertices of $F_k(G)$ correspond to configurations of $k$
indistinguishable tokens placed at different vertices of $G$, where
two configurations are adjacent whenever one configuration can be
reached from the other by moving one token along an edge from its
current position to an unoccupied vertex. ''
\end{quote}
As an example, the $2$-token graph of the cycle graph $C_5$ is shown in Figure \ref{fig:C5}.
Clearly, $F_1(G)\simeq F_{n-1}(G)\simeq G$; we say that $F_1(G)$ and $F_{n-1}(G)$ are the {\it trivial token graphs} of $G$.

\begin{figure}
  \begin{center}
   \includegraphics[width=0.8\textwidth]{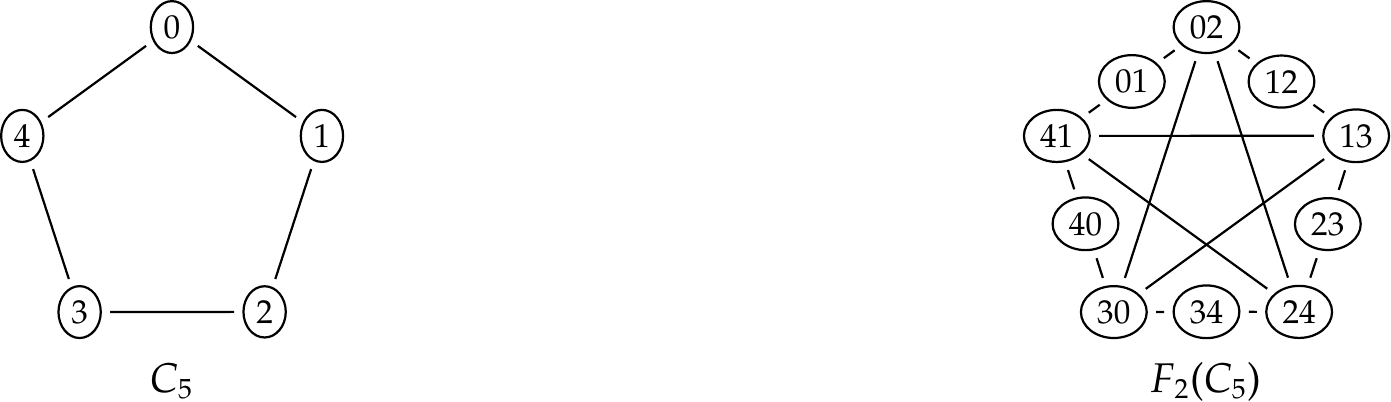}
\end{center}
\caption{The cycle graph $C_5$ and its token graph $F_2(C_5)$.} \label{fig:C5}
\end{figure}


The \emph{Johnson graph} $J(n,k)$ is the graph whose
vertices are the $k$-subsets of an $n$-set, where two such subsets $A$
and $B$ are adjacent whenever $|A \cap B| = k - 1$. Thus,
the Johnson graph $J(n,k)$ is isomorphic to the $k$-token
graph of the complete graph $K_n$, \emph{i.e.},
$J(n,k)\simeq F_k(K_n)$. Therefore, results obtained for token
graphs also apply for Johnson graphs; Johnson graphs are widely studied
due to connections with coding theory, see, \emph{e.g.},
\cite{EB,GS,GWL, lieb, neun}.

We write  $u\sim v$ whenever $u$ and $v$ are adjacent vertices in $G$.
The edge joining these vertices is denoted by $[u,v]$.
For a nonempty set $X\subseteq V$, and a vertex $v\in V$, $N_X(v)$ denotes
the set of neighbors that $v$ has in $X$, \emph{i.e.} $N_X(v):=\{u\in X: u\sim v\}$;
the degree of $v$ in $ X$  is denoted by $d_{X}(v):=|N_{X}(v)|$.
For a vertex $v\in V$, $N(v)$ denotes the set of neighbors that $v$ has
in $V$, \emph{i.e.}, $N(v):=\{u\in V \; | \; u\sim v\}$; and $N[v]$ denotes
the closed neighborhood of the vertex $v$, \emph{i.e.}, $N[v]:= N(v)\cup \{v\}$.
We denote by $d_G(v_i):=|N(v_i)|$ the degree of a
vertex $v_i\in V$ in $G$, and  by $\d(G),\D(G)$ the minimum and maximum degree of $G$, respectively.
The complement of  a nonempty set $S\subseteq V$ is
denoted by $\overline{S}$ and the complement of $G$ by $\overline{G}$.
The subgraph induced by $S$ is denoted by  $\langle S\rangle$.
As usual, $\mathbbm{1}_X$ denotes the \emph{indicator function} of $X$, \emph{i.e.}, $\mathbbm{1}_X(x)=1$ if $x\in X$ and $\mathbbm{1}_X(x)=0$ otherwise.

In this paper we characterize precisely, for each value of $k$, which graphs have a regular $k$-token graph and which connected graphs have a planar $k$-token graph;
in particular we show the following.

\begin{theorem}\label{thm:reg}
 Let $G$ be a graph of $n$ vertices and let $2 \le k \le n-2$ be an integer. Then $F_k(G)$ is regular
 if and only if one of the following four cases holds.
 \begin{enumerate}
  \item $G$ is isomorphic to the complete graph $K_n$ on $n$ vertices;

  \item $G$ is isomorphic to $\overline{K_n}$;

  \item $G$ is isomorphic  to complete bipartite graph $K_{1,n-1}$ and $k=n/2$;

  \item $G$ is isomorphic  to $\overline{K_{1,n-1}}$ and $k=n/2$.
 \end{enumerate}

\end{theorem}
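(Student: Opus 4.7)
The plan is to translate regularity of $F_k(G)$ into a combinatorial condition on $G$ and then carry out a case analysis. The starting observation is that a neighbor of the $k$-subset $A$ in $F_k(G)$ is obtained by sliding a token along an edge from $A$ to $\overline{A}$, so the degree of $A$ in $F_k(G)$ equals the size of the cut,
$$d_{F_k(G)}(A)\;=\;e(A,\overline{A})\;=\;\sum_{v\in A} d_G(v)\;-\;2\,e(\langle A\rangle).$$
Thus $F_k(G)$ is regular if and only if $e(A,\overline{A})$ takes the same value on every $k$-subset $A$ of $V$.

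To extract a local condition, I compare two $k$-subsets that differ by a single token move. Fix a $(k-1)$-subset $S\subseteq V$ and $u,v\notin S$, and put $A=S\cup\{u\}$, $B=S\cup\{v\}$; a direct computation gives
$$d_{F_k(G)}(A)-d_{F_k(G)}(B)\;=\;(d_G(u)-d_G(v))-2(d_S(u)-d_S(v)),$$
so regularity forces
$$d_S(u)-d_S(v)\;=\;\frac{d_G(u)-d_G(v)}{2}\qquad(\star)$$
for every pair $u\ne v$ and every $(k-1)$-subset $S\subseteq V\setminus\{u,v\}$. Fixing such $u,v$ and applying $(\star)$ to two subsets $S=R\cup\{x\}$ and $S'=R\cup\{y\}$ that themselves differ by a single swap (the hypothesis $2\le k\le n-2$ is exactly what guarantees enough room for this), subtraction yields
$$\mathbbm{1}_{x\sim u}-\mathbbm{1}_{x\sim v}\;=\;\mathbbm{1}_{y\sim u}-\mathbbm{1}_{y\sim v}\qquad\text{for all }x,y\in V\setminus\{u,v\}.$$
Hence $c(u,v):=\mathbbm{1}_{x\sim u}-\mathbbm{1}_{x\sim v}\in\{-1,0,1\}$ is independent of $x$; this is the combinatorial heart of the argument.

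The remainder is a case split on the values of $c$. If $c(u,v)=0$ for every pair $u,v$, then for any triple $u,v,w$ one gets $u\sim v\iff u\sim w\iff v\sim w$; so either $G$ has no edges or any edge forces $G$ to be complete, yielding $G\cong\overline{K_n}$ or $G\cong K_n$. Otherwise some pair has $c(u,v)=\pm 1$, and after swapping the names of $u$ and $v$ I may take $c(u,v)=1$, which means $u$ is adjacent to every vertex of $V\setminus\{u,v\}$ while $v$ is adjacent to none. Plugging into $(\star)$ gives $k-1=(d_G(u)-d_G(v))/2$, which in both subcases $u\sim v$ and $u\not\sim v$ forces $k=n/2$. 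Running the same analysis on pairs $(v,w)$ with $w\ne u,v$ (whose $c$-value is determined by the already-known adjacency with $u$) then rules out every other edge and identifies $G$ as $K_{1,n-1}$ when $u\sim v$ and as $\overline{K_{1,n-1}}$ when $u\not\sim v$.

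The converse direction is a short direct verification: for $K_n$ and $\overline{K_n}$ regularity is immediate, and for $K_{1,n-1}$ and its complement one computes $e(A,\overline{A})$ for the two orbits of $k$-subsets under the automorphism group and checks that the two values coincide exactly when $k=n/2$. The main obstacle I expect is the $c(u,v)\ne 0$ branch: the information coming from several pairs has to be combined consistently in order to pin down all adjacencies of $G$, and some careful bookkeeping is needed to single out the ``special'' vertex (the center of the star, or the isolated vertex of its complement) and to show that all remaining vertices play interchangeable roles.
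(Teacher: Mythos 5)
Your plan is correct, and it takes a genuinely different route from the paper's. The paper proves Theorem~\ref{thm:reg} by splitting into two cases according to whether $G$ is regular or not: for regular $G$ it first shows (Lemma~\ref{l:Reg}) that regularity of $F_k(G)$ forces $d_A(b)$ to be a constant and then derives a contradiction unless $G\simeq K_n$ or $G\simeq \overline{K_n}$ (Theorem~\ref{th:Reg}); for non-regular $G$ it fixes $u,v$ with $d_G(u)<d_G(v)$, partitions the remaining vertices into the classes $X,Y,W,Z$, and works through a chain of configuration-counting lemmas (Lemmas~\ref{non-regular1}, \ref{non-regular2}, \ref{non-regular3}), also using $F_k(G)\simeq F_{n-k}(G)$ to assume $k\le n/2$, before assembling Theorem~\ref{main-nonregular}. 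You instead extract one unified local condition: from $d_{F_k(G)}(A)=e(A,\overline A)$ and the comparison of $A=S\cup\{u\}$ with $B=S\cup\{v\}$ you get your identity $(\star)$, and comparing two such sets $S=R\cup\{x\}$ and $S'=R\cup\{y\}$ (possible precisely because $k\le n-2$ leaves room for $R$) shows that $\mathbbm{1}_{N(u)}(x)-\mathbbm{1}_{N(v)}(x)$ is independent of $x\in V\setminus\{u,v\}$. That single lemma replaces both halves of the paper's dichotomy: the value $0$ for every pair forces $\overline{K_n}$ or $K_n$, while a value $\pm1$ for some pair forces $u$ adjacent to all of $V\setminus\{u,v\}$ and $v$ to none, whence $(\star)$ gives $k-1=(n-2)/2$, i.e.\ $k=n/2$, in both subcases $u\sim v$ and $u\not\sim v$; and your final step of evaluating the constant for the pair $(v,w)$ at $x=u$ and at a second vertex $w'$ does correctly determine all remaining adjacencies, yielding $K_{1,n-1}$ or $\overline{K_{1,n-1}}$. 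What your approach buys is uniformity and brevity: no regular/non-regular split, no $X,Y,W,Z$ bookkeeping, no appeal to $F_k(G)\simeq F_{n-k}(G)$. What the paper's approach buys is a collection of intermediate statements about $G$ itself (constancy of $d_A(b)$ for regular $G$, domination by the high-degree vertex, $d_R(u)=0$) that are stated separately and could be reused. Your converse verification via the two orbits of $k$-subsets is the same routine computation the paper relies on.
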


\begin{theorem}\label{cor:tree}
Let $G$ be a connected graph of order $n>10$ and let $2 \le k \le n-2$ be an integer. Then $F_k(G)$ is planar if and only if
$k=2$ or $k=n-2$, and $G \simeq P_n$.
\end{theorem}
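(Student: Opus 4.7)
My plan is to combine a symmetry reduction, an explicit planar drawing for the ``if'' direction, a bipartite Euler bound for the case $k\ge 3$, and a subdivision argument for the case $k=2$. The isomorphism $F_k(G)\simeq F_{n-k}(G)$, given by $A\mapsto V\setminus A$ (symmetric differences are preserved under simultaneous complementation), lets me assume $2\le k\le n/2$. For the ``if'' direction, I would place each vertex $\{i,j\}$ of $F_2(P_n)$ at the grid point $(i,j)$ with $1\le i<j\le n$: the $F_2$-edges then join $\{i,j\}$ only to $\{i\pm 1,j\}$ or $\{i,j\pm 1\}$ when these are valid pairs of distinct entries, and can be drawn as straight segments on the triangular grid without crossings.

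For $k\ge 3$, the plan is to fix any spanning tree $T$ of $G$ and use that $F_k(T)$ is a spanning subgraph of $F_k(G)$. Since $T$ is bipartite with parts $A\sqcup B$, the parity function $S\mapsto |S\cap A|\bmod 2$ properly $2$-colours $F_k(T)$, so $F_k(T)$ is bipartite. Assuming $F_k(T)$ is planar, the bipartite Euler bound $|E|\le 2|V|-4$ together with $|V(F_k(T))|=\binom{n}{k}$ and $|E(F_k(T))|=(n-1)\binom{n-2}{k-1}$ (each edge of $T$ contributes $\binom{n-2}{k-1}$ edges to $F_k(T)$), combined with the identity $\binom{n-2}{k-1}=\binom{n}{k}\cdot k(n-k)/(n(n-1))$, simplifies to $k(n-k)\le 2n-4n/\binom{n}{k}$, which is certainly violated when $k(n-k)>2n$. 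For $3\le k\le n/2$ the minimum of $k(n-k)$ is $3(n-3)$, and $3(n-3)>2n$ holds for all $n\ge 10$; hence $F_k(T)$, and so $F_k(G)$, is non-planar.

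For the case $k=2$ with $G$ connected, $n>10$, and $G\not\simeq P_n$, the plan is to exhibit a subdivision of $K_5$ or $K_{3,3}$ in $F_2(G)$. If $\Delta(G)\ge 5$, choose $w$ of degree $\ge 5$ with neighbours $u_1,\ldots,u_5$: the five principal vertices $\{w,u_i\}$ together with the internal vertices $\{u_i,u_j\}$ form a $K_5$-subdivision, where the pair $\{w,u_i\},\{w,u_j\}$ is joined directly when $u_iu_j\in E(G)$ and otherwise by the length-$2$ path from $\{w,u_i\}$ through $\{u_i,u_j\}$ to $\{w,u_j\}$, with distinct pairs using distinct internal vertices. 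When $\Delta(G)\le 4$, there are two sub-cases: (i) $G\simeq C_n$; and (ii) $G$ has a vertex $v$ of degree $\ge 3$, so a BFS spanning tree $T$ rooted at $v$ is a non-path tree, and since $F_2(T)\subseteq F_2(G)$ the problem reduces to showing $F_2(T)$ is non-planar for any non-path tree $T$ on $n>10$ vertices. The main obstacle is precisely sub-case (i) together with the low-$\Delta$ part of (ii): sporadic small planar examples (such as $F_2(C_n)$ for small $n$ and $F_2$ of small spider trees with short branches) show that the threshold $n>10$ is essential, so the $K_{3,3}$-subdivisions in $F_2(C_n)$ and in $F_2(T)$ must be constructed by hand, routing internally-disjoint paths through the extra vertices guaranteed by the size hypothesis.
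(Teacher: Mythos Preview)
Your argument for $k\ge 3$ via the bipartite Euler bound is correct and is a genuinely different route from the paper's. The paper instead shows that $F_3(P_7)$ is non-planar and then, for general $G$ and $3\le k\le n-3$, locates a copy of $F_3(P_7)$ inside $F_k(G)$; your edge-count is more direct and handles paths and non-path trees uniformly, at the cost of giving no information about which forbidden minor actually appears.

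The $k=2$ case, however, has a real gap. You dispatch $\Delta(G)\ge 5$ cleanly, but the sentence ``the $K_{3,3}$-subdivisions in $F_2(C_n)$ and in $F_2(T)$ must be constructed by hand'' is exactly where the content lies, and you have not supplied it. For $C_n$ this is not hard once you know where to look (a $K_5$-subdivision in $F_2(C_5)$ suffices, and $F_2(C_5)$ embeds in $F_2(C_n)$), but for an arbitrary non-path tree $T$ with $\Delta(T)\in\{3,4\}$ a direct subdivision construction is genuinely delicate: there \emph{are} non-path trees on up to ten vertices with planar $F_2$, so the hypothesis $n>10$ must be used through a case analysis on the positions of the branch vertices, and ``routing through the extra vertices'' is not a one-line matter. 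The paper closes this gap with two tools you are missing. First, a minor-preservation lemma: if $H$ is a minor of $G$ then $F_k(H)$ is a minor of $F_k(G)$, which immediately reduces $C_n$ to $C_5$. Second, a non-planarity criterion (coming from results on Cartesian products): $F_k(G)$ is non-planar whenever $G$ contains vertex-disjoint copies of $P_3$ and $K_{1,3}$. With that criterion in hand, the tree case becomes a short structural argument showing that any non-path tree on more than ten vertices with $\Delta\le 4$ contains such a disjoint pair. Without an analogue of this, your $k=2$ case is incomplete.
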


We study regularity in Section~\ref{S_reg} and planarity in Section~\ref{S_Planar}. In
Section~\ref{sec:small} we consider the planarity of the token graphs of graphs
of small order.

\section{Regularity}\label{S_reg}

In this section we prove Theorem~\ref{thm:reg}. We split the proof in two cases: whether $G$ is regular
or not. This are shown in Theorems~\ref{th:Reg} and ~\ref{main-nonregular}, respectively

Let $\overline{F_k(G)}^J$ be the complement of $F_k(G)$ with respect to the Johnson graph, \emph{i.e.},
\[V\big(\overline{F_k(G)}^J\big):=V\big(F_k(G)\big) \text{ and } E\big(\overline{
F_k(G)}^J\big):=E\big(J(n,k)\big)\setminus E\big(F_k(G)\big).
\] The following statement follows easily from the definitions.

\begin{proposition}\label{p:complement}
$F_k\big(\overline{G}\big)=\overline{F_k(G)}^J$ for every graph $G$.
\end{proposition}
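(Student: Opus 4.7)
The plan is to verify equality of edge sets directly from the definitions, since both graphs are already declared to share the vertex set of all $k$-subsets of $V$. The whole argument hinges on rewriting the membership conditions for an edge $\{A,B\}$ in each of the two graphs and observing they produce the same criterion.

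First I would unfold what it means for $\{A,B\}$ to be an edge of $F_k(\overline{G})$: by definition this holds iff $A \triangle B = \{u,v\}$ for some edge $[u,v]$ of $\overline{G}$, i.e.\ iff there exist distinct $u,v \in V$ with $u \not\sim v$ in $G$ and $A \triangle B = \{u,v\}$. Next I would unfold membership in $\overline{F_k(G)}^J$: by construction this requires $\{A,B\} \in E(J(n,k))$, which by definition of the Johnson graph is equivalent to $|A \cap B| = k-1$, hence to $|A \triangle B| = 2$; so $A \triangle B = \{u,v\}$ for some distinct $u,v \in V$. Additionally, $\{A,B\}$ must not be an edge of $F_k(G)$, meaning $u$ and $v$ are not adjacent in $G$.

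Comparing the two unfolded conditions, both say exactly that the symmetric difference $A \triangle B$ consists of two distinct vertices $u,v$ with $u \not\sim v$ in $G$, i.e.\ $u \sim v$ in $\overline{G}$. So the two edge sets coincide, which gives the desired equality of graphs.

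No part of this proof poses any real obstacle; it is a pure bookkeeping verification. The only subtle point worth flagging is the role of the Johnson-graph condition $|A \cap B| = k-1$ in the definition of $\overline{F_k(G)}^J$: without taking the complement relative to $J(n,k)$ rather than relative to the complete graph on $V(F_k(G))$, one would wrongly get edges between $k$-subsets with $|A \triangle B| > 2$, and the identification with $F_k(\overline{G})$ would fail. I would therefore explicitly note that the restriction to Johnson edges is what makes the $|A \triangle B|=2$ condition automatic on both sides of the claimed equality.
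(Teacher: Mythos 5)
Your verification is correct and is exactly the argument the paper has in mind: the paper gives no written proof, stating only that the proposition ``follows easily from the definitions,'' and your direct comparison of the two edge-membership conditions (symmetric difference equal to a non-edge of $G$, equivalently a Johnson edge that is not an edge of $F_k(G)$) is that easy definitional check, including the right observation about why the complement must be taken relative to $J(n,k)$.
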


Since the Johnson graph $J(n,k)$ is a regular graph, Proposition \ref{p:complement} has the following direct consequence.

\begin{corollary}\label{cor-nonregular}
Let $G$ be a graph such that $F_k(G)$ is regular; then $F_k\big(\overline{G}\big)$ is also regular.
\end{corollary}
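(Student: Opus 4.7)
The plan is to exploit Proposition~\ref{p:complement} together with the regularity of the Johnson graph. First I would recall the standard fact that $J(n,k)$ is $k(n-k)$-regular: a vertex $A$ (a $k$-subset) is adjacent in $J(n,k)$ to exactly those $k$-subsets obtained by swapping one of its $k$ elements for one of the $n-k$ elements outside it. Next, fix any vertex $A$ of $J(n,k)=V(F_k(G))$; by the very definition of $\overline{F_k(G)}^J$, the edges of $J(n,k)$ incident to $A$ partition into those belonging to $F_k(G)$ and those belonging to $\overline{F_k(G)}^J$. Therefore
\[
d_{\overline{F_k(G)}^J}(A) \;=\; k(n-k) \;-\; d_{F_k(G)}(A).
\]
If $F_k(G)$ is $d$-regular, the right-hand side equals $k(n-k)-d$ for every $A$, so $\overline{F_k(G)}^J$ is regular of that degree. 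Invoking Proposition~\ref{p:complement} to rewrite $\overline{F_k(G)}^J$ as $F_k(\overline{G})$ then finishes the argument.

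There is essentially no obstacle: the statement is a vertex-by-vertex degree count that uses only the vertex-transitivity (in fact, the mere regularity) of $J(n,k)$ and the identity of Proposition~\ref{p:complement}. The only point requiring even a moment's care is confirming that $V(F_k(G))=V(F_k(\overline{G}))=V(J(n,k))$, which is immediate since all three graphs have as vertices the $k$-subsets of the same $n$-vertex set.
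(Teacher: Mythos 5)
Your argument is correct and is exactly the paper's intended route: the paper derives the corollary directly from Proposition~\ref{p:complement} together with the regularity of $J(n,k)$, and your degree count $d_{\overline{F_k(G)}^J}(A)=k(n-k)-d_{F_k(G)}(A)$ simply spells out that one-line deduction. Nothing is missing.
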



\subsection{Regular token graph of a regular graph}

In this section we answer the following question: when is the token graph of a regular graph also regular? We show in Theorem~\ref{th:Reg}
that there are exactly two regular graphs which produce regular token graphs.

\begin{lemma}\label{l:Reg}
Let $G$ be a regular graph and $2\le k\le n-2$ such that $F_k(G)$ is regular.
Then there exist a constant $c$, depending on $k$, the degree of $G$ and the degree of $F_k(G)$,
such that $d_{A}(b)=c$ for every $A\in V\big(F_k(G)\big)$ and every $b\notin A$.
\end{lemma}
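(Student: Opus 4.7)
The plan is to express the degree of $A$ in $F_k(G)$ in terms of the number $e(A):=|E(\langle A\rangle)|$ of edges induced by $A$, deduce from regularity of $F_k(G)$ that $e(A)$ is the same for every $k$-subset, extend this via a short double-counting argument to $(k+1)$-subsets, and then recover $d_A(b)$ as the difference of two such constants.

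Let $r$ denote the common degree of $G$ and $s$ the common degree of $F_k(G)$. First I would observe that, for any $A\in V(F_k(G))$, the edges of $F_k(G)$ incident with $A$ are in bijection with pairs $(a,b)$ satisfying $a\in A$, $b\notin A$ and $a\sim b$. Summing over $a$ and using $r$-regularity of $G$ gives
\[
s \;=\; d_{F_k(G)}(A) \;=\; \sum_{a\in A}\bigl(d_G(a)-d_A(a)\bigr) \;=\; kr - 2\,e(A),
\]
so $e(A)=(kr-s)/2=:e_k$ is the same for every $k$-subset $A$.

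Next I would establish the analogous statement for $(k+1)$-subsets by double counting. For a $(k+1)$-subset $B$, count pairs $(b,\epsilon)$ with $b\in B$ and $\epsilon$ an edge of $\langle B\setminus\{b\}\rangle$: summing over $b$ yields $(k+1)e_k$ by the previous step, while each edge of $\langle B\rangle$ lies in $\langle B\setminus\{b\}\rangle$ for exactly the $k-1$ vertices $b\in B$ not incident with it, giving $(k-1)|E(\langle B\rangle)|$. Since $k\ge 2$, this forces $|E(\langle B\rangle)|=(k+1)e_k/(k-1)=:e_{k+1}$, independent of $B$.

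Finally, for any $k$-subset $A$ and any $b\notin A$, setting $B=A\cup\{b\}$ gives $|E(\langle B\rangle)|=|E(\langle A\rangle)|+d_A(b)$, so $d_A(b)=e_{k+1}-e_k$, a quantity depending only on $k$, $r$, and $s$, as required. I do not foresee a real obstacle here: the arithmetic is valid precisely because the hypothesis $2\le k\le n-2$ guarantees that both $k$- and $(k+1)$-subsets exist and that the factor $k-1$ in the double count is nonzero.
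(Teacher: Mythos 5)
Your proof is correct, and after the shared opening observation it takes a genuinely different route from the paper's. Both arguments begin from the same degree formula, namely that for an $r$-regular $G$ the degree of a $k$-set $A$ in $F_k(G)$ is $kr-2|E(\langle A\rangle)|$ (the paper writes this in the expanded form $kr_1-\sum_{u\in A\cap B}d_{A\cap B}(u)-2d_{A\cap B}(a)$). From there the paper stays at level $k$: it compares two sets adjacent in the Johnson graph to conclude $d_{A\cap B}(a)=d_{A\cap B}(b)$, converts this into the relation $d_A(u)=d_A(b)-\mathbbm{1}_{N(b)}(u)$ for all $u\in A$, and then recounts the degree of $A$ to obtain the linear identity $r_2=kr_1-(k-1)d_A(b)$. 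You instead read off immediately that the induced edge count $e_k=(kr-s)/2$ is the same for every $k$-set, promote this to $(k+1)$-sets by the double count $(k-1)|E(\langle B\rangle)|=(k+1)e_k$ (valid since $k\ge 2$ and $k+1\le n-1$), and recover $d_A(b)=e_{k+1}-e_k$ as a difference of two constants; this evaluates to $(kr-s)/(k-1)$, which agrees with the paper's $c=(r_2-kr_1)/(1-k)$. Your version avoids the indicator-function bookkeeping and makes the structural reason for the lemma transparent (constant induced edge counts at consecutive levels), at the cost of introducing the auxiliary double count; the paper's version is slightly more self-contained in that it never leaves the vertex set of $F_k(G)$, and its intermediate claim $d_{A_u}(u)=d_{A_u}(b)$ is the form reused directly in the proof of Theorem~\ref{th:Reg}.
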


\begin{proof}
Fix $A\in V\big(F_k(G)\big)$ and $b\notin A$. Let $B \in V\big(F_k(G)\big)$ with $B\setminus A =\{b\}$
and let $\{a\}=A\setminus B$.
Let $r_1$ and $r_2$ be the degrees of $G$ and $F_k(G)$, respectively.

We first claim that $d_{A\cap B}(b)=d_{A\cap B}(a)$.
Note that
\[
\begin{aligned}
d_{F_k(G)}(A) &= \displaystyle\sum_{u\in A} d_{\overline A}(u) = \sum_{u\in A} \left(r_1 - d_{A}(u) \right) \\
&= kr_1  - \sum_{u\in A} \left( d_{A\cap B}(u) + \mathbbm{1}_{N(a)} (u) \right) \\
&= kr_1  - \sum_{u\in A\cap B} d_{A\cap B}(u) - 2 d_{A\cap B}(a).
\end{aligned}
\]
Analogously, we obtain
\[
d_{F_k(G)}(B) =k r_1  - \sum_{u\in A\cap B} d_{A\cap B}(u) - 2 d_{A\cap B}(b).
\]
Since $F_k(G)$ is regular, the claim follows.

For every $u\in A$, let $A_u:=A\setminus \{u\}$; by the claim we have that $d_{A_u}(u)=d_{A_u}(b)$.
Note that $d_{A}(u)=d_{A_u}(u)$ for every $u\in A$, and that $d_{A_u}(b)=d_A(b)-\mathbbm{1}_{N(b)}(u)$.
Thus, we have $d_{A}(u)=d_{A}(b)-\mathbbm{1}_{N(b)}(u)$ for every $u\in A$. Furthermore, we have $d_A(b)$ vertices $u\in A$ with $d_{A}(u)=d_{A}(b)-1$ and $k-d_A(b)$ vertices $u\in A$ with $d_{A}(u)=d_{A}(b)$.
Therefore,
\[
\begin{aligned}
r_2&=d_A(b)\big(r_1-d_A(b)+1\big)+ \big(k-d_A(b)\big) \big(r_1-d_A(b)\big) \\
&= k\big(r_1-d_A(b)\big) + d_A(b)= kr_1 - (k-1)d_A(b).
\end{aligned}
\]

The result follows with $c:=(r_2-kr_1)/(1-k)$.
\end{proof}

It is a simple fact that $F_k(G)$ is a regular graph for every admissible $k$ if $G$ is any empty graph $E_n$ or any complete graph $K_n$.
The following result shows that they are the unique regular graphs with regular $k$-token graph for  $2\le k\le n-2$.

\begin{theorem}\label{th:Reg}
Let $G$ be regular graph not isomorphic to  either $E_n$ or $K_n$. Then its token graph $F_k(G)$, with $2\le k\le n-2$, is non-regular.
\end{theorem}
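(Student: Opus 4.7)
The plan is to apply Lemma~\ref{l:Reg} to reduce regularity of $F_k(G)$ (for $G$ regular) to a strong ``constant intersection'' condition, and then show this condition forces $G$ to be either empty or complete via a simple swap argument.

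First I would assume, for contradiction, that $G$ is a regular graph distinct from $E_n$ and $K_n$ whose $k$-token graph $F_k(G)$ is regular for some $2 \le k \le n-2$. Invoking Lemma~\ref{l:Reg}, I obtain a constant $c$ (depending on $k$, the degree of $G$, and the degree of $F_k(G)$) such that for every $k$-subset $A \subseteq V$ and every $b \in V \setminus A$,
\[
d_A(b) \;=\; |A \cap N(b)| \;=\; c.
\]

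Next I would show that this forces every vertex $b$ to have $N(b) = \emptyset$ or $N(b) = V \setminus \{b\}$, contradicting the hypothesis on $G$. Since $G$ is regular and not isomorphic to $E_n$ or $K_n$, its common degree $r_1$ satisfies $1 \le r_1 \le n-2$. Fix a vertex $b$, a neighbor $s \in N(b)$, and a non-neighbor $t \in V \setminus (N(b) \cup \{b\})$; both choices are possible. The inequalities $2 \le k \le n-2$ give $|V \setminus \{b,s,t\}| = n-3 \ge k-1 \ge 1$, so I can pick a $(k-1)$-subset $C \subseteq V \setminus \{b,s,t\}$ and form the two $k$-subsets
\[
A_1 := C \cup \{s\}, \qquad A_2 := C \cup \{t\},
\]
both contained in $V \setminus \{b\}$. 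Then $|A_1 \cap N(b)| = |C \cap N(b)| + 1$ while $|A_2 \cap N(b)| = |C \cap N(b)|$, so $d_{A_1}(b) \ne d_{A_2}(b)$, contradicting the constancy asserted by Lemma~\ref{l:Reg}.

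There is no serious obstacle here: once Lemma~\ref{l:Reg} is in hand, the whole argument is a one-swap trick, and the only thing to verify is the counting $n - 3 \ge k - 1$ needed to carve out the common part $C$. The slight delicacy is making sure the hypotheses $k \ge 2$ and $k \le n-2$ are both genuinely used (the former to have $|C| = k-1 \ge 1$, and the latter to have enough room in $V \setminus \{b,s,t\}$).
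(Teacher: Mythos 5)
Your proposal is correct and is essentially the paper's own argument: both invoke Lemma~\ref{l:Reg} and then contradict the constancy of $d_A(b)$ by swapping a neighbor of a fixed vertex $b$ for a non-neighbor inside the token set (your $A_1=C\cup\{s\}$, $A_2=C\cup\{t\}$ correspond exactly to the paper's $A$ and $A_u=(A\cup\{v'\})\setminus\{u\}$). You merely make explicit the counting $n-3\ge k-1$ that the paper leaves implicit, which is a fine touch but not a different method.
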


\begin{proof}
Suppose to the contrary that $F_k(G)$ is regular. Since $G$ is not isomorphic to $K_n$ nor to
$E_n$, there exists a vertex $v$ of $G$ such that the following holds. Vertex $v$
is of degree at least one and there exists another vertex $v'$ not adjacent to $v$.
Let $A$ be a vertex of $F_k(G)$ such that $A\cap N(v)\neq\emptyset$ and $v,v'\notin A$.
Consider any vertex $u\in A\cap N(v)$ and let $A_u:=\big(A\cup\{v'\}\big)\setminus\{u\}$.
Hence, we have that $d_{A}(v)=d_{A_u}(v)+1$ which contradicts Lemma \ref{l:Reg}.
\end{proof}


\subsection{Regular token graphs of non-regular graphs}

In this section we show that there are exactly two non-regular graphs which produce regular token graphs.
Throughout this subsection, $G$ is a fixed non-regular graph, $u$ and $v$ are vertices of $G$ such that $d_G(u)<d_G(v)$,
and $F:=F_k(G)$. Also, we partition $R:=V(G)\setminus \{u,v\}$ into  four subsets:

\begin{equation*}
\begin{split}
X&:=N(u)\setminus N[v], \\
Y&:=N(v)\setminus N[u], \\
W&:=N(u)\cap N(v), \\
Z&:=R\setminus (X\cup Y \cup W).
\end{split}
\end{equation*}

Note that $Z$ consists precisely of the vertices of $R$ which are nonadjacent to neither $u$ nor $v$ (see Figure~\ref{non-regular}).

\begin{figure}[h]
  \begin{center}
   \includegraphics[width=0.45\textwidth]{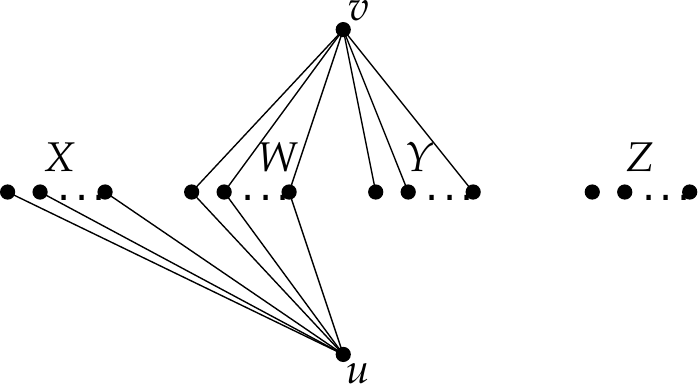}
\end{center}
\caption{Partition of $R$ depending on whether or not $w \in R$ is adjacent to $u$ or $v$.}\label{non-regular}
\end{figure}


In this context, we have the following statements.

\begin{lemma}\label{non-regular1}
If $k-1\leq |X|+|W|+|Z|$, then $F_k(G)$ is non-regular for $2\leq k\leq n-2$.
\end{lemma}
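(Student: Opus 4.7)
The plan is to exhibit two vertices $A,A'\in V(F_k(G))$ having different degrees in $F_k(G)$. The natural candidates are obtained by fixing a common ``base'' $(k-1)$-set $S\subseteq R$ and setting $A:=S\cup\{u\}$, $A':=S\cup\{v\}$; the asymmetry $d_G(u)<d_G(v)$ should push the two degrees apart, \emph{provided} $S$ is chosen so that it does not see $v$ via too many edges from $Y$. The hypothesis $k-1\le |X|+|W|+|Z|$ is exactly what allows us to select $S\subseteq X\cup W\cup Z$ of size $k-1$, that is, $S$ containing no vertex of $Y$.

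First I would pick such an $S$ and define $A,A'$ as above. Next, starting from the identity
\[
d_{F_k(G)}(T)=\sum_{w\in T}d_G(w)-2|E(\langle T\rangle)|,
\]
which is immediate from the definition of $F_k(G)$ (every token in $T$ can move along each of its $G$-edges to an unoccupied vertex, and edges internal to $T$ are subtracted twice), subtracting across $A$ and $A'$ yields
\[
d_{F_k(G)}(A)-d_{F_k(G)}(A')=\big(d_G(u)-d_G(v)\big)-2\big(d_S(u)-d_S(v)\big).
\]

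Now the choice of $S$ pays off: since $S$ misses $Y$, one has $d_S(v)=|S\cap W|$ and $d_S(u)=|S\cap X|+|S\cap W|$, so $d_S(u)-d_S(v)=|S\cap X|\ge 0$. Combined with $d_G(u)-d_G(v)<0$, the right-hand side is strictly negative, so $d_{F_k(G)}(A)\ne d_{F_k(G)}(A')$ and $F_k(G)$ is non-regular.

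The main ``obstacle'' is really just the framing: one must recognize that the degree difference splits into a global part $d_G(u)-d_G(v)$ and a local correction $2(d_S(u)-d_S(v))$, and then choose $S$ so that the correction cannot cancel the global part. Once the base set is forced to live in $X\cup W\cup Z$, everything collapses to a one-line sign comparison; no case analysis or further machinery is needed.
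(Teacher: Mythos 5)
Your proof is correct and follows essentially the same strategy as the paper: both compare the degrees of $S\cup\{u\}$ and $S\cup\{v\}$ for a $(k-1)$-set $S$ chosen inside $X\cup W\cup Z$, so that the local correction cannot cancel $d_G(v)-d_G(u)>0$. The only difference is presentational: your identity $d_{F_k(G)}(T)=\sum_{w\in T}d_G(w)-2|E(\langle T\rangle)|$ treats every such $S$ uniformly, whereas the paper splits into three cases according to whether the base set is drawn from $Z$, from $Z\cup X$, or from $Z\cup X\cup W$.
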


\begin{proof}   We analyze three cases separately.

\begin{itemize}
\item $k-1\leq |Z|$.

We choose a fixed $S\subseteq Z$ such that $|S|=k-1$.  Now consider the vertices $A, B$ of $F$ defined as follows: $A:=S\cup\{u\}$ and $B:=S\cup\{v\}$.
By definition of $F$ we know that the degree of the vertex $A$ (respectively, $B$) in $F$ corresponds to the number of edges of $G$ with one end in $A$ (respectively, $B$)
and the other in $V(G)\setminus A$ (respectively, $V(G)\setminus B$). Let $r$ be the number of edges of $G$ with one end in $S$ and the other in $R\setminus S$.
Then $d_F(A)=d_G(u)+r$ and $d_F(B)=d_G(v)+r$. Since $d_G(v)-d_G(u)>0$, we have $d_F(B)\not=d_F(A)$. Hence, $F$ is not regular.

\item $|Z|<k-1\leq |Z|+|X|$.

We choose a fixed nonempty subset
$S$ of $X$ such that
$|S|+|Z|=k-1$. Thus the sets $A:=S\cup Z\cup\{u\}$ and $B:=S\cup Z \cup\{v\}$ are vertices of $F$.
Let $r$ denote the number of edges of $G$ with one end in $S\cup Z$ and the other in $R\setminus (S\cup Z)$.
Clearly, $d_F(A)=d_G(u) -|S|+r$ and $d_F(B)=d_G(v)+|S|+r$ (recall that $u, v \not \in R$).
Thus $d_F(B)-d_F(A)=d_G(v)-d_G(u)+2|S|>0$, because $d_G(v)-d_G(u)>0$. Hence, $F$ is not regular.

\item $|Z|+|X|<k-1\leq |Z|+|X|+|W|$.

We choose a fixed nonempty subset $S$ of $W$ such that  $|S|+|Z|+|X|=k-1$. Thus the sets $A:=S\cup Z\cup X\cup\{u\}$ and
$B:=S\cup Z \cup X \cup\{v\}$ are vertices of $F$. Let $r$ denote the number of edges of $G$ with one end in $S\cup Z \cup X$ and the other in
$R\setminus (S\cup Z \cup X)$. It is easy to see that $d_F(A)=d_G(u) - |X| + r$ and
$d_F(B)=d_G(v) + |X|  + r$. Thus $d_F(B)-d_F(A)=d_G(v)-d_G(u)+2|X|>0$, because $d_G(v)-d_G(u)>0$. Hence, $F$ is not regular.
\end{itemize}

\end{proof}

\begin{lemma}\label{non-regular2}
If $F_k(G)$ is a regular graph for some $2\leq k\leq n-2$, then $v$ is adjacent to every vertex in $V(G)\setminus \{u,v\}$ in $G$.
\end{lemma}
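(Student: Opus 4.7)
My plan is to extract a purely combinatorial constraint on the partition $X, Y, W, Z$ from the regularity of $F := F_k(G)$, and then show that this constraint already forces $X \cup Z = \emptyset$. For any $(k-1)$-subset $C \subseteq R$, the two sets $A := C \cup \{u\}$ and $B := C \cup \{v\}$ are $k$-subsets of $V(G)$, so regularity of $F$ yields $d_F(A) = d_F(B)$. Using the standard identity $d_F(S) = \sum_{s \in S} d_G(s) - 2\, e_G(S)$, where $e_G(S)$ is the number of edges of $G$ with both endpoints in $S$, together with $N(u) \cap R = X \cup W$ and $N(v) \cap R = Y \cup W$, a short computation gives
\[
d_F(B) - d_F(A) = \bigl(d_G(v) - d_G(u)\bigr) - 2\bigl(|C \cap Y| - |C \cap X|\bigr),
\]
so regularity forces $|C \cap Y| - |C \cap X|$ to take the constant value $(d_G(v) - d_G(u))/2$ for every $(k-1)$-subset $C$ of $R$.

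The heart of the proof is then to argue that this constancy forces the map $f := \mathbbm{1}_Y - \mathbbm{1}_X \colon R \to \{-1,0,1\}$ to be constant on $R$. Because $2 \le k \le n-2$, we have $1 \le k-1 \le |R|-1$, so for any two distinct $r_1, r_2 \in R$ one can choose a $(k-1)$-subset $C \subseteq R$ containing $r_2$ but not $r_1$; replacing $r_2$ by $r_1$ inside $C$ changes $|C \cap Y| - |C \cap X|$ by exactly $f(r_1) - f(r_2)$, which must therefore vanish. Hence $f$ is constant on $R$.

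To finish, note that $|Y| - |X| = d_G(v) - d_G(u) > 0$, so the constant value of $f$ cannot be $-1$ (which would give $|Y| = 0 \le |X|$) nor $0$ (which would give $|X| = |Y|$); hence $f \equiv 1$ on $R$, meaning $Y = R$ and $X = W = Z = \emptyset$. In particular $v$ is adjacent in $G$ to every vertex of $V(G) \setminus \{u,v\}$, as claimed. The degree computation is routine and the ruling-out of the values $-1$ and $0$ is immediate; the only conceptually delicate point is the combinatorial observation that constancy of a linear functional on $\binom{R}{k-1}$ forces its generating function $f$ to be constant, but the single-swap argument handles this cleanly given the range of $k$, so I do not expect a genuine obstacle.
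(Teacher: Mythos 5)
Your proof is correct, and it takes a genuinely different route from the paper's. Both arguments rest on the same counting identity $d_F(S)=e_G(S,\overline S)=\sum_{s\in S}d_G(s)-2e_G(S)$ and on comparing $d_F(C\cup\{u\})$ with $d_F(C\cup\{v\})$, but the way the contradiction is extracted differs. The paper first invokes Lemma~\ref{non-regular1}, together with the reduction to $2\le k\le n/2$ via $F_k(G)\simeq F_{n-k}(G)$, to force $|X|+|W|+|Z|<k-1$ and hence $|Y|\ge k$; it then tests only two hand-picked configurations, $S_1\cup\{u\}$ versus $S_1\cup\{v\}$ with $S_1\subseteq Y$, $|S_1|=k-1$, and $S_2\cup\{u,y\}$ versus $S_2\cup\{v,y\}$ with $S_2\subseteq Y$, $|S_2|=k-2$, where $y$ is a hypothetical non-neighbour of $v$, and plays the two resulting equations off each other to reach $\mathbbm{1}_{N(y)}(u)=-1$. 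You instead let $C$ range over \emph{all} $(k-1)$-subsets of $R$, observe that regularity forces $|C\cap Y|-|C\cap X|=\sum_{c\in C}(\mathbbm{1}_Y-\mathbbm{1}_X)(c)$ to be constant, and use a single-element exchange (legitimate because $1\le k-1\le n-3$) to conclude that $\mathbbm{1}_Y-\mathbbm{1}_X$ is constant on $R$; the sign condition $|Y|-|X|=d_G(v)-d_G(u)>0$ then leaves only the constant $1$. This buys you two things: the argument is self-contained (no appeal to Lemma~\ref{non-regular1} nor to the $F_k\simeq F_{n-k}$ symmetry), and it proves strictly more than the statement, namely $Y=R$ and $X=W=Z=\emptyset$, which yields Corollary~\ref{cor:empty} and Lemma~\ref{non-regular3} (that $d_R(u)=0$) as immediate byproducts, whereas the paper handles the latter with a separate case analysis. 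What the paper's route buys in exchange is economy of verification: it only needs two test vertices of $F_k(G)$ and reuses machinery it has already established.
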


\begin{proof}

Since $F_{n-k}(G)$ is isomorphic to $F_{k}(G)$, we may assume  that  $2\leq k\leq n/2$.
By Lemma~\ref{non-regular1} and the hypothesis we have that $|X|+|W|+|Z|< k-1\leq n/2 -1$.
Then $|Y|\geq n/2$ because $|X|+|Z|+|W|+|Y|=n-2$.
Suppose that there exists a vertex $y$ in $V(G)\setminus \{u,v\}$ that is nonadjacent to $v$.
Note that $y$ must be an element of $X\cup Z$.

Let $S_1$ be a fixed subset of  $Y$ such that $|S_1|=k-1$. Thus the sets $A_1:=S_1\cup \{u\}$ and  $B_1:=S_1\cup \{v\}$ are vertices of $F$. Let $r_1$ be the number of edges of $G$ with one end in $S_1$ and the other in  $R\setminus S_1$. Then $d_F(A_1)=d_G(u)+(k-1)+r_1$ and
$d_F(B_1)=d_G(v)-(k-1)+r_1$. By the regularity of $F$ we have
$0=d_F(B_1)-d_F(A_1)=d_G(v)-d_G(u)-2(k-1)$, or equivalently

\begin{equation}\label{eq_one}
d_G(v)-d_G(u)=2(k-1).
\end{equation}


Similarly, let $S_2$ be a fixed subset of $Y$ such that $|S_2|=k-2$. Thus the sets $A_2:=S_2\cup \{u,y\}$ and  $B_2:=S_2\cup \{v,y\}$ are vertices of $F$. Let $r_2$ be the number of edges of $G$ with one end in $S_2\cup \{y\}$ and the other in  $R\setminus (S_2\cup \{y\})$.  Then $d_F(A_2)=d_G(u)-\mathbbm{1}_{N(u)}(y)+(k-2)+r_2$ and $d_F(B_2)=d_G(v)-(k-2)+\mathbbm{1}_{N(y)}(u)+r_2.$ Since $\mathbbm{1}_{N(u)}(y)=\mathbbm{1}_{N(y)}(u)$, we have that
$0=d_F(B_2)-d_F(A_2)=d_G(v)- d_G(u)  -2(k-2)+2\mathbbm{1}_{N(y)}(u)$, or equivalently

\begin{equation}\label{eq_two}
d_G(v)- d_G(u)=2(k-2)-2\mathbbm{1}_{N(y)}(u).
\end{equation}


From~\ref{eq_one} and~\ref{eq_two} we obtain

\begin{equation*}
2(k-1)=2(k-2)-2\mathbbm{1}_{N(y)}(u).
\end{equation*}
Which implies that $\mathbbm{1}_{N(y)}(u)=-1$, a contradiction.
\end{proof}

The next  statement is an immediate consequence of Lemma~\ref{non-regular2}.

\begin{corollary}\label{cor:empty}
If $F_k(G)$ is a regular graph, for some $2\leq k\leq n-2$, then  $X$ and $Z$ are empty sets.
\end{corollary}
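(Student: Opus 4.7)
The plan is to apply Lemma~\ref{non-regular2} directly and then read off the definitions of the partition of $R$. Lemma~\ref{non-regular2} tells us that regularity of $F_k(G)$, together with the standing assumption $d_G(u) < d_G(v)$, forces $v$ to be adjacent in $G$ to every vertex of $R = V(G) \setminus \{u,v\}$; equivalently, $R \subseteq N(v)$.

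With this in hand I would just unpack the definitions of $X$ and $Z$. By construction $X = N(u) \setminus N[v]$, so each $w \in X$ satisfies $w \notin N(v)$; since $X \subseteq R \subseteq N(v)$, this forces $X = \emptyset$. Analogously, $Z = R \setminus (X \cup Y \cup W)$ consists of those $w \in R$ that are adjacent to neither $u$ nor $v$, so in particular $Z \cap N(v) = \emptyset$, which together with $R \subseteq N(v)$ yields $Z = \emptyset$.

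I do not anticipate any real obstacle here: the corollary is a bookkeeping consequence of Lemma~\ref{non-regular2}, since once $v$ is known to dominate $R$ in $G$, the two blocks of the partition of $R$ consisting of non-neighbors of $v$ must vanish. The only substantive work was done in proving Lemma~\ref{non-regular2} itself (via the two identities~\eqref{eq_one} and~\eqref{eq_two}); the passage to $X = Z = \emptyset$ is immediate.
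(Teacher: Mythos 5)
Your proposal is correct and matches the paper's intent exactly: the paper states the corollary as an immediate consequence of Lemma~\ref{non-regular2}, and your argument is precisely that unpacking, namely $R\subseteq N(v)$ forces the two blocks $X$ and $Z$ (whose elements are by definition non-neighbors of $v$) to be empty. Nothing further is needed.
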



\begin{lemma}\label{non-regular3}
If $F_k(G)$ is a regular graph for some $2\leq k\leq n-2$, then $d_R(u)=0$.
\end{lemma}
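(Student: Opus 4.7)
The plan is to show that $d_R(u) = |W|$ and then derive a contradiction from the assumption $|W| \ge 1$, using a degree comparison in $F$ analogous to (but different from) the one carried out in Lemma~\ref{non-regular2}.

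First I would set up the reduction. By Corollary~\ref{cor:empty} we have $X = Z = \emptyset$, so $R = W \cup Y$ and $N(u)\cap R = W$. Thus $d_R(u) = |W|$, and the task reduces to showing $W = \emptyset$. As in Lemma~\ref{non-regular2}, I may assume $2 \le k \le n/2$; then Lemma~\ref{non-regular1} gives $|W| = |X|+|W|+|Z| \le k-2$, and the identity $|X|+|Y|+|W|+|Z| = n-2$ gives $|Y| = n-2-|W| \ge n-k \ge k$, so picking a subset of $Y$ of size $k-2$ is possible.

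Next, suppose for contradiction $|W| \ge 1$ and fix $w \in W$. I would choose $S \subseteq Y$ with $|S| = k-2$, and then form the $k$-vertices
\[
A := S \cup \{u, w\}, \qquad B := S \cup \{v, w\}
\]
of $F$. To compute $d_F(A)$ and $d_F(B)$ I would use the clean identity $d_F(T) = \sum_{t\in T} d_G(t) - 2e(G[T])$, so that
\[
d_F(B) - d_F(A) = \bigl(d_G(v) - d_G(u)\bigr) - 2\bigl(e(G[B]) - e(G[A])\bigr).
\]
The symmetry between $A$ and $B$ cancels the edges within $S$, the edge from $w$ to each of $u,v$ (both present, since $w\in W$), and the contributions between $S$ and $w$. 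The only asymmetry is between $\{u,v\}$ and $S$: because $S\subseteq Y = N(v)\setminus N[u]$, the vertex $v$ is adjacent to every element of $S$ while $u$ is adjacent to none. Hence $e(G[B]) - e(G[A]) = k-2$, and
\[
d_F(B) - d_F(A) = d_G(v) - d_G(u) - 2(k-2).
\]
Regularity of $F_k(G)$ forces this to be $0$, giving $d_G(v) - d_G(u) = 2(k-2)$, which directly contradicts equation~(\ref{eq_one}) from the proof of Lemma~\ref{non-regular2}. Therefore $W = \emptyset$ and $d_R(u) = 0$.

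The main conceptual step is recognizing that the strategy of Lemma~\ref{non-regular2} (swap $u$ for $v$ while carrying an extra witness vertex and compare degrees) still works when the witness lies in $W$ rather than in $X \cup Z$, but now produces an \emph{inconsistent} value of $d_G(v)-d_G(u)$. The only arithmetic obstacle is checking that $|Y|$ is large enough to host the set $S$ of size $k-2$, which follows from Lemma~\ref{non-regular1} together with $k \le n/2$.
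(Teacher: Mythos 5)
Your argument is correct, but it takes a genuinely different route from the paper's. The paper does not invoke Lemma~\ref{non-regular1} at this point; instead it splits into the cases $|W|<k-1$ and $|W|\ge k-1$, compares the degrees of $S\cup W\cup\{u\}$ versus $S\cup W\cup\{v\}$ (with $S\subseteq Y$) in the first case and of $S\cup\{u\}$ versus $S\cup\{v\}$ (with $S\subseteq W$) in the second, and uses the conclusion of Lemma~\ref{non-regular2} in the form $d_G(v)=n-2+\mathbbm{1}_{N(v)}(u)$ to reach the contradictions $|W|=2k-n\le 0$ and $|W|=n-2$, respectively. You instead use Lemma~\ref{non-regular1} (legitimately: regularity gives $|X|+|W|+|Z|\le k-2$, hence $|W|\le k-2$, so the paper's second case never arises) and then carry a single witness $w\in W$ together with a $(k-2)$-subset of $Y$, obtaining $d_G(v)-d_G(u)=2(k-2)$, which clashes with $d_G(v)-d_G(u)=2(k-1)$. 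This buys you a one-case proof that needs only $X=\emptyset$ from Corollary~\ref{cor:empty} rather than the explicit degree formula for $v$; your cardinality checks ($|Y|\ge n-k\ge k$, $Y\cap W=\emptyset$) and the identity $d_F(T)=\sum_{t\in T}d_G(t)-2e\big(\langle T\rangle\big)$ are all in order. One presentational caveat: equation~(\ref{eq_one}) appears inside the proof of Lemma~\ref{non-regular2}, after the temporary supposition that a vertex $y$ nonadjacent to $v$ exists, so it should not be cited as a standing fact; but its derivation uses neither $y$ nor anything beyond what you already have (compare $S_1\cup\{u\}$ with $S_1\cup\{v\}$ for $S_1\subseteq Y$ with $|S_1|=k-1$, which exists since $|Y|\ge k$), so rederiving it in one line inside your proof closes this gap and the argument is complete.
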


\begin{proof} Again, we may assume  that  $2\leq k\leq n/2$ because $F_{n-k}(G)$ is isomorphic to $F_{k}(G)$.
Suppose to the contrary that $d_R(u)>0$. By Corollary~\ref{cor:empty}, $X$ and $Z$ are empty; therefore, $|W|\geq 1$.

First suppose that $|W|< k-1$. Let $S$ be a  fixed subset of $Y$ such that $|S|+|W|=k-1$.
Then the sets $A:=S\cup W\cup \{u\}$ and $B:=S\cup W\cup \{v\}$ are vertices of F. Let $r$ be the number of edges of $G$ with one end in $S\cup W$ and the other in  $R\setminus (S\cup W)$. Then $d_F(A)=(k-1)+\mathbbm{1}_{N(u)}(v)+r$ and
 $d_F(B)=|W|+(n-2+\mathbbm{1}_{N(v)}(u))-(k-1)+r$. In the last equation we are using that $d_G(v)=n-2+\mathbbm{1}_{N(v)}(u)$  (Lemma~\ref{non-regular2}) . Since $\mathbbm{1}_{N(u)}(v)=\mathbbm{1}_{N(v)}(u)$, we have that $0=d_F(B)-d_F(A)=|W|+(n-2)-2(k-1)$, or equivalently,
$|W|=2k-n$. This implies that $|W|\leq 0$, a contradiction.

Now suppose that $|W|\geq k-1$. Let $S$ be a  fixed subset of $W$ such that
$|S|=k-1$.  Then the sets $A:=S\cup \{u\}$ and $B:=S\cup \{v\}$ are vertices of F. Let $r$ be the number of edges of $G$ with one end in $S$ and the other in  $R\setminus S$. Then $d_G(A)=|W|+\mathbbm{1}_{N(u)}(v)+r$ and $d_G(B)=(n-2)+\mathbbm{1}_{N(v)}(u)+r$.
 Since $\mathbbm{1}_{N(u)}(v)=\mathbbm{1}_{N(v)}(u)$, we have that
$0=d_G(B)-d_G(A)= (n-2) - |W|$, or equivalently, $|W|=n-2$. This implies, $d_G(v)=d_G(u)$, a contradiction.
\end{proof}

We are ready to prove the main result of this section.

\begin{theorem}\label{main-nonregular}
Let $G$ be a non-regular graph such that  $F_k(G)$ is regular for some $2\leq k\leq n-2$.
Then $G$ is isomorphic to $K_{1,n-1}$ or to $\overline{K_{1,n-1}}$, and in both cases $k=n/2$.
\end{theorem}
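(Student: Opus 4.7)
The plan is to promote the three earlier structural results — Lemma~\ref{non-regular2}, Corollary~\ref{cor:empty}, and Lemma~\ref{non-regular3} — to uniform statements about every ordered pair $(u,v)$ with $d_G(u)<d_G(v)$, use this to force $G$ into one of the two asserted isomorphism types, and finally extract $k=n/2$ from equation~\eqref{eq_one}.

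First I would note that although $u$ and $v$ were fixed at the start of the subsection, the proofs of Lemma~\ref{non-regular2} and Lemma~\ref{non-regular3} go through for any ordered pair with $d_G(u)<d_G(v)$. Thus, for every such pair, Lemma~\ref{non-regular2} makes $v$ adjacent to all of $V\setminus\{u,v\}$, while Lemma~\ref{non-regular3} leaves $u$ with no neighbor in $V\setminus\{u,v\}$. Hence $d_G(v)\geq n-2$ and $d_G(u)\leq 1$ whenever $d_G(u)<d_G(v)$. Since $G$ is non-regular at least one such pair exists, and since the inequalities hold for every pair of distinct degrees, $G$ has only two degree values, one in $\{0,1\}$ and one in $\{n-2,n-1\}$ (disjoint because $n\geq 4$); in particular $\delta(G)\in\{0,1\}$ and $\Delta(G)\in\{n-2,n-1\}$.

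I would then split on $\delta(G)$. If $\delta(G)=0$, fix an isolated vertex $u^*$; any other vertex $v$ has positive degree, so Lemma~\ref{non-regular2} applied to $(u^*,v)$ makes $v$ adjacent to all of $V\setminus\{u^*,v\}$. This rules out a second isolated vertex, forces every $v\neq u^*$ to have degree exactly $n-2$, and makes $V\setminus\{u^*\}$ a clique; hence $G\cong\overline{K_{1,n-1}}$. If $\delta(G)=1$, fix a degree-one vertex $u^*$ with unique neighbor $w^*$. For each $v\neq u^*$ of degree greater than $1$, Lemma~\ref{non-regular3} applied to $(u^*,v)$ forces $u^*$'s unique neighbor to be $v$, so $w^*$ is the only vertex of degree greater than $1$. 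Every other vertex then has degree exactly $1$ (none can be isolated, since $\delta=1$) and, by the same argument applied to each such vertex with $w^*$, its unique neighbor is $w^*$. Hence $G\cong K_{1,n-1}$.

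To finish, I would apply equation~\eqref{eq_one}, namely $d_G(v)-d_G(u)=2(k-1)$, using the isomorphism $F_k(G)\cong F_{n-k}(G)$ to assume $k\leq n/2$ as in the proof of Lemma~\ref{non-regular2}. In both identified graphs the gap between minimum and maximum degrees equals $n-2$, so $k-1=(n-2)/2$ and $k=n/2$. The main delicate point is the opening observation that the earlier lemmas can be applied simultaneously to every pair of vertices of different degrees; once that is made, the remainder is a brief and routine case analysis.
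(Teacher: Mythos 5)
Your proposal is correct and follows essentially the same route as the paper: it uses Lemmas~\ref{non-regular2} and~\ref{non-regular3}, applied to arbitrary pairs with $d_G(u)<d_G(v)$, to force exactly two degree values (one in $\{0,1\}$, one in $\{n-2,n-1\}$) and then the same two-case analysis identifying $\overline{K_{1,n-1}}$ and $K_{1,n-1}$, which is precisely the paper's argument in a slightly streamlined form. The only difference is the final step: the paper recomputes the token-graph degrees of $S\cup\{u\}$ and $S\cup\{v\}$ directly in the two identified graphs, whereas you cite the identity \eqref{eq_one} from inside the proof of Lemma~\ref{non-regular2}; this is legitimate since that identity's derivation uses neither the contradiction hypothesis of that proof nor anything beyond $|Y|\ge k-1$ (which holds trivially here, as $|Y|=n-2$), but in a standalone write-up you should restate that short count rather than reference an equation embedded in another proof.
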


\begin{proof}
Again, we assume  that  $2\leq k\leq n/2$. First, we show that there are only two possible degrees in $G$. Suppose to the contrary
that there exists three vertices $x_1, x_2$ and $x_3$ such that $d_G(x_1) < d_G(x_2) < d_G(x_3)$. Apply Lemma~\ref{non-regular3} twice:
once with $u=x_1$ and $v=x_3$, and a second time with $u=x_2$ and $v=x_3$. Then $d(x_1)=0$ and $d(x_2)=1$. We obtain a contradiction
by applying Lemma~\ref{non-regular2} with $u=x_2$ and $v=x_3$, since $x_3$ is not adjacent to $x_1$.

Let $r_1$, $r_2$, with $r_1 < r_2$, be the only two possible degrees in $G$. By Lemma~\ref{non-regular3} we have that $r_1=0$ or $r_1=1$.
Let $x$ and $y$ be vertices of $G$ of  degree $r_1$ and $r_2$, respectively.

Suppose that $r_1=0$. We claim that $x$ is the only vertex of degree $0$. Suppose that there exists a second
vertex $z$ with degree $0$. We arrive at a contradiction by applying Lemma~\ref{non-regular2} with $u=x$ and $v=y$, as
$y$ and $z$ are not adjacent. Therefore, all vertices of $G$ distinct from $x$ have degree $r_2$. Moreover,
Lemma~\ref{non-regular2} with $u=x$ and $y=v$  implies that $r_2=n-2$. Therefore, $G$ is isomorphic to $\overline{K_{1,n-1}}$.
Now we show that $k=n/2$.
Let $S$ be any subset of $V(G)\setminus \{x\}$ with $|S|=k-1$.
Then the sets $A:=S \cup \{x\}$ and $B:=S\cup\{z\}$, where $z$ is any element in $V(G)\setminus A$, are vertices of $F$ with $d_F(A)=(k-1)(n-k)$ and $d_F(B)=k(n-k-1)$.
As $F$ is a regular graph we have that $0=d_F(B)-d_F(A)=n-2k$ which implies that $k=n/2$.

Suppose that $r_1=1$. Then, Lemma~\ref{non-regular2} with $u=x$ and $y=v$ implies $r_2=n-1$ and $y$ is adjacent
to every vertex in $G$. Therefore,
there cannot be another vertex distinct from $y$ adjacent to every vertex in $G$ since $d(x)=1$. Thus, $G$
is isomorphic to $K_{1,n-1}$. Finally we show that $k=n/2$.
Let $S$ be any subset of $N(y)$ with $|S|=k-1$. Let $A:=S\cup \{y\}$ and $B:=S \cup\{z\}$, where $z$ is
any element in $V(G) \setminus A$. Then $A$ and $B$ are vertices of $F$ with  $d_F(A)=n-k$ and $d_F(B)=k$.
As $F$ is a regular graph we have that $0=d_F(A)-d_F(B)=n-2k$ which implies that $k=n/2$.

\end{proof}


\section{Planarity}
\label{S_Planar}
In this section we fully characterize, in terms of $G$, when the $k$-token graph of $G$ is planar.
Since $F_1(G)\simeq F_{n-1}(G)\simeq G$, we have that $F_1(G)$ and $F_{n-1}(G)$ are planar if and only if $G$ is planar;
therefore, we only consider the cases when $2\le k\le n-2$ and $n \ge 4$.

As usual, we denote by $G/e$ the graph obtained from  graph $G$ by contracting the edge $e$ of $G$;
and also, we denote by $G-e$ ($G-v$, respectively)
the graph obtained from graph $G$ by deleting the edge $e$ (vertex $v$, respectively) of $G$.
A graph $H$ is a \emph{minor} of a graph $G$ if a graph isomorphic to $H$ can be obtained from $G$ by
contracting some edges, deleting some edges,
and deleting some isolated vertices.  A graph $H$ is a \emph{subdivision} of a graph $G$
if $H$ can be obtained from $G$ by subdividing some edges.

Kuratowski~\cite{kur} proved that a graph is planar if and only if it does not contain
a subdivision of the complete graph $K_5$ nor a subdivision of the bipartite graph $K_{3,3}$.
Wagner~\cite{kur} proved that a graph is planar if and only if it does not contain the complete graph $K_5$ nor the complete bipartite graph $K_{3,3}$ as a minor.

First we show that if $G'$ is a minor of $G$ then $F_{k}(G')$ is a minor of $F_k(G)$.
This result, which is of independent interest, is used to prove the main theorems of this section.

\begin{lemma}\label{l:minor}
If $G'$ is a minor of $G$ then $F_{k}(G')$ is a minor of $F_k(G)$.
\end{lemma}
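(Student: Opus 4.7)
The plan is to verify the lemma for each of the three basic minor operations separately. Since any minor of $G$ is obtained by a sequence of edge deletions, edge contractions, and deletions of isolated vertices, the general statement follows by composing the corresponding operations on $F_k(G)$.

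The edge-deletion case is immediate: $F_k(G-e)$ and $F_k(G)$ share the same vertex set, and removing $e=[u,v]$ from $G$ destroys precisely those edges of $F_k(G)$ whose symmetric difference is $\{u,v\}$, so $F_k(G-e)$ is a spanning subgraph. The isolated-vertex case is almost as easy: if $v$ is isolated in $G$ then for every $k$-subset $A$ containing $v$ the symmetric difference of $A$ with any neighbor in $F_k(G)$ cannot involve $v$ (since $v$ has no neighbors in $G$), hence $A$ is adjacent only to subsets that also contain $v$. Consequently the $k$-subsets of $V(G)\setminus\{v\}$ induce a copy of $F_k(G-v)$ in $F_k(G)$, and one recovers $F_k(G-v)$ as a minor by first deleting every edge incident to some subset containing $v$ and then removing the now-isolated remaining vertices.

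The content of the lemma lies in the edge-contraction step. Let $e=[u,v]\in E(G)$ and let $w$ be the vertex of $G/e$ obtained by identifying $u$ and $v$. I would partition $V(F_k(G))$ according to the intersection of each $k$-subset with $\{u,v\}$ into four classes $\mathcal{A}_\emptyset$, $\mathcal{A}_u$, $\mathcal{A}_v$, and $\mathcal{A}_{uv}$. For each $(k-1)$-subset $S\subseteq V(G)\setminus\{u,v\}$, the pair $S\cup\{u\}$ and $S\cup\{v\}$ is an edge of $F_k(G)$, because its symmetric difference is $\{u,v\}=e$, and because different $S$ give vertex-disjoint pairs, these edges form a matching $M$. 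The procedure is to delete every vertex of $\mathcal{A}_{uv}$ (together with its incident edges) and then contract each edge of $M$; the contracted pair $\{S\cup\{u\},S\cup\{v\}\}$ is declared to represent $S\cup\{w\}\in V(F_k(G/e))$, while the vertices of $\mathcal{A}_\emptyset$ represent the $k$-subsets of $V(G/e)$ that do not contain $w$.

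The main obstacle is a case analysis showing that after this contraction, followed by a final round of edge deletions to kill parallel edges, the resulting graph is isomorphic to $F_k(G/e)$. The three cases correspond to edges of $F_k(G/e)$ joining two $w$-free subsets, two $w$-containing subsets, or one of each; in each case the required edge is realized by at least one edge of $F_k(G)$ that survives the contraction. The delicate point is that when a vertex $y\in V(G)\setminus\{u,v\}$ is adjacent to both $u$ and $v$ in $G$, the two edges $[S\cup\{y\},S\cup\{u\}]$ and $[S\cup\{y\},S\cup\{v\}]$ of $F_k(G)$ collapse to the same edge of $F_k(G/e)$ after contraction, producing parallel edges that have to be removed. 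A short check also shows that no spurious edges appear, which will complete the argument.
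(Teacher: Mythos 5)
Your proposal is correct and follows essentially the same route as the paper: reduce to the three basic minor operations, dispose of edge deletion and (isolated-)vertex deletion directly, and obtain $F_k(G/e)$ by contracting the matching formed by the pairs $S\cup\{u\}$, $S\cup\{v\}$ over all $(k-1)$-subsets $S$ of $V(G)\setminus\{u,v\}$. You are in fact somewhat more explicit than the paper, which leaves the removal of the $k$-subsets containing both $u$ and $v$ and the suppression of parallel edges implicit in its assertion that the contraction yields a subgraph isomorphic to $F_k(G/e)$.
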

\begin{proof}
First suppose that  $G'$ is obtained from $G$ from applying one minor operation on $G$.
That is by  deleting a vertex, deleting an edge or contracting an edge;
we show that $F_k(G')$ is a minor of $F_k(G)$.
\begin{itemize}
 \item $G'$ is obtained from $G$ by deleting a vertex $a$.

Then $F_k(G')$ is isomorphic to the graph obtained from $F_k(G)$
by deleting all the vertices of $F_k(G)$ which contain $a$. Thus, $F_k(G')$ is a minor of $F_k(G)$.

\item $G'$ is obtained from $G$ by deleting an edge $[a,b]$.

Then $F_k(G')$ is isomorphic to the graph obtained from $F_k(G)$ by deleting all
the edges $[A,B]$ of $F_k(G)$ such that $A\triangle B=\{a,b\}$. Thus, $F_k(G')$ is a minor of $F_k(G)$.

\item $G'$ is obtained from $G$ by contracting an edge $e:=[a,b]$.

Consider the following subsets of  $V(F_k(G))$:

\begin{eqnarray*}
\mathcal{A}&:=&\left\{A\in V\big(F_k(G)\big) \; \big| \; a\in A, b\notin A \right\},\\
\mathcal{B}&:=&\left\{B\in V\big(F_k(G)\big) \; \big| \; b\in B, a\notin B \right\},\\
\end{eqnarray*}
Clearly, $\mathcal{A}, \mathcal{B}$ are disjoint, and  $|\mathcal A|=|\mathcal B|=\binom{n-2}{k-1}$.
Since $a\sim b$ we have that for each $A\in \mathcal A$ there is exactly one $B\in \mathcal B$ with $A\sim B$ in $F_k(G)$,
in fact $A\triangle B = \{a,b\}$.
Hence, by contracting these $\binom{n-2}{k-1}$ edges from $F_k(G)$,
we obtain a subgraph isomorphic to $F_k(G/e)=F_k(G')$.  Thus, $F_k(G')$ is a minor of $F_k(G)$.
\end{itemize}

Now suppose that $G'$ is obtained by applying two or more minor operations on $G$.
Since the minor relation is a transitive relation, the result follows by induction on the number of these operations.
\end{proof}

Lemma \ref{l:minor}  implies the following.

\begin{theorem}\label{cor:minor}
Let $G$ be a graph and let $H$ be a minor of $G$ such that every non-trivial token graph of $H$ is non-planar. Then $F_k(G)$ is non-planar for $2\le k\le n-2$.
\end{theorem}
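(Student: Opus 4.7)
The plan is to obtain this statement as an immediate corollary of Lemma \ref{l:minor}, combined with Wagner's theorem (stated earlier in the section) that the class of planar graphs is closed under taking minors.

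I would fix $k$ with $2\le k\le n-2$. Using the isomorphism $F_k(G)\simeq F_{n-k}(G)$ induced by the bijection $A\mapsto V(G)\setminus A$ on vertex sets, which preserves symmetric differences and hence token-graph adjacency, I may assume without loss of generality that $k$ lies in a range where $F_k(H)$ is a non-trivial token graph of $H$, namely $2\le k\le |V(H)|-2$. By hypothesis $F_k(H)$ is then non-planar, and by Lemma \ref{l:minor} applied to the minor $H$ of $G$, the graph $F_k(H)$ is a minor of $F_k(G)$. Since planarity is inherited by minors, $F_k(G)$ is also non-planar.

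The proof really is a one-liner, with Lemma \ref{l:minor} doing all the substantive work; the only point that deserves care is the bookkeeping on the range of $k$. The complementation symmetry reduces the full range $[2,n-2]$ to $[2,\lfloor n/2\rfloor]$, and Lemma \ref{l:minor} together with the hypothesis covers $[2,|V(H)|-2]$; the mild obstacle, then, is ensuring these two reductions mesh, which in the intended applications will follow from $|V(H)|$ being taken large enough relative to $n$ (equivalently, from applying the theorem with minors $H$ that nearly span $V(G)$).
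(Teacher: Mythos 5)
Your reduction to $2\le k\le |V(H)|-2$ is not legitimate, and this is a genuine gap rather than bookkeeping. The complementation symmetry acts on $G$, not on $H$: it lets you assume $k\le\lfloor n/2\rfloor$, but it gives no control in terms of $|V(H)|$. When $|V(H)|$ is much smaller than $n$ --- which is exactly the situation in which the paper invokes Theorem~\ref{cor:minor}, namely $H\simeq K_{1,5}$ or $H\simeq C_5$ in the proof of Theorem~\ref{th:degree+cycle}, with $n$ and $k$ arbitrary --- every $k$ with $|V(H)|-2<k<n-|V(H)|+2$ is left uncovered: for such $k$ the graph $F_k(H)$ is trivial, a single vertex, or empty, so neither the hypothesis nor Lemma~\ref{l:minor} applied directly to $H$ says anything about $F_k(G)$. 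Your closing remark that in the intended applications $H$ nearly spans $V(G)$ is the opposite of how the theorem is actually used, so the obstacle cannot be assumed away.

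The gap can be closed with one extra idea, staying entirely within Lemma~\ref{l:minor}. Write $m:=|V(H)|$; for the hypothesis to have content one has $m\ge 5$ (as noted after the theorem), so in particular $F_2(H)$ is non-planar. If $2\le k\le m-2$, your argument is fine. If $k>m-2$, perform the minor operations taking $G$ to $H$ one at a time; each operation lowers the number of vertices by at most one, and since $m\le k+2\le n$, some intermediate minor $\widetilde G$ of $G$ has exactly $k+2$ vertices and still contains $H$ as a minor. By Lemma~\ref{l:minor}, $F_k(\widetilde G)$ is a minor of $F_k(G)$; since $|V(\widetilde G)|=k+2$ we have $F_k(\widetilde G)\simeq F_2(\widetilde G)$, and a second application of Lemma~\ref{l:minor} shows that $F_2(H)$ is a minor of $F_2(\widetilde G)$. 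Hence the non-planar graph $F_2(H)$ is a minor of $F_k(G)$, and $F_k(G)$ is non-planar. (The paper itself states the theorem as an immediate consequence of Lemma~\ref{l:minor} without addressing this range of $k$, so the difficulty you ran into is real; but it must be resolved along these lines, not dismissed.)
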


 Notice that the order of $H$ in Theorem~\ref{cor:minor} is greater than 4 because $F_2(K_4)$ is a planar graph.
The \emph{circumference} $c(G)$ of a graph $G$ is the supremum of the lengths of its cycles,
if $G$ is a tree we define $c(G)=0$.
The following theorem implies the non-planarity of many token graphs.

\begin{theorem}\label{th:degree+cycle} Let $G$ be a graph and $2 \leq k \leq n-2$. If $\Delta(G)$ or $c(G)$ are greater than or equal to $5$ then $F_k(G)$ is non-planar.
\end{theorem}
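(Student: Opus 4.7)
The plan is to invoke Theorem \ref{cor:minor} with two minor witnesses: a $5$-cycle $C_5$ when $c(G)\geq 5$, and a star $K_{1,5}$ when $\Delta(G)\geq 5$. For the first reduction, any cycle of length $\ell\geq 5$ sitting inside $G$ contracts to $C_5$ by collapsing $\ell-5$ of its edges; for the second, a vertex of degree at least $5$ together with five of its neighbours gives $K_{1,5}$ as a subgraph. The heart of the argument is then to verify that every non-trivial token graph of each of these two small witnesses is non-planar. Throughout, I would use the isomorphism $F_k(H)\cong F_{n-k}(H)$ induced by set-complement to cut the number of cases roughly in half.

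For $H=C_5$, only $k=2$ needs attention, since $F_3(C_5)\cong F_2(C_5)$. I would show directly that $F_2(C_5)$ is a subdivision of $K_5$: the five $2$-subsets of non-adjacent vertices of $C_5$ form a $5$-cycle inside $F_2(C_5)$, and the five $2$-subsets of adjacent vertices of $C_5$ are each of degree $2$ with both neighbours among the former, subdividing the five ``diagonal'' edges needed to complete a $K_5$ on those five non-adjacent pairs. For $H=K_{1,5}$ with center $c$, the relevant values are $k=2,3,4$ with $F_4\cong F_2$. For $F_2(K_{1,5})$ a short case analysis shows that each leaf-pair $\{i,j\}$ is adjacent only to $\{c,i\}$ and $\{c,j\}$, so the ten leaf-pairs (all of degree $2$) subdivide the ten edges of a $K_5$ on the five central vertices $\{c,1\},\dots,\{c,5\}$, again producing a subdivision of $K_5$.

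The main obstacle will be $F_3(K_{1,5})$, which is a bipartite $3$-regular graph on $20$ vertices and $30$ edges. Planar bipartite graphs of girth at least $6$ satisfy $|E|\leq \tfrac{3}{2}(|V|-2)$, so it will suffice to show that $F_3(K_{1,5})$ has girth at least $6$; since the graph is bipartite, this reduces to excluding $4$-cycles. An attempted $4$-cycle of the form $\{c,i,j\}\sim\{i,j,k\}\sim\{c,a,b\}\sim\{x,y,z\}\sim\{c,i,j\}$ forces $\{a,b\}\subset\{i,j,k\}$ with $\{a,b\}\neq\{i,j\}$, and then $\{x,y,z\}$ must contain both $\{i,j\}$ and $\{a,b\}$, forcing $\{x,y,z\}=\{i,j,k\}$ and violating distinctness. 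With girth at least $6$ established, the estimate $|E|\leq \tfrac{6}{4}(20-2)=27<30$ rules out planarity. Applying Theorem \ref{cor:minor} in each of the two witness cases then completes the proof.
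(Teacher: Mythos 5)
Your proposal is correct, and its skeleton coincides with the paper's proof: both reduce the statement, via Theorem~\ref{cor:minor}, to showing that $F_2(C_5)$, $F_2(K_{1,5})$ and $F_3(K_{1,5})$ are non-planar, using $F_k\simeq F_{n-k}$ to dismiss $F_3(C_5)$ and $F_4(K_{1,5})$, and obtaining the minors $C_5$ (from a long cycle) and $K_{1,5}$ (from a high-degree vertex) exactly as the paper does. The only divergence is in how the three base cases are certified. The paper appeals to Figures~\ref{fig:C5} and~\ref{fig:StarsToken}: $F_2(C_5)$ and $F_2(K_{1,5})$ are exhibited as containing subdivisions of $K_5$, and $F_3(K_{1,5})$ is shown to have a $K_5$ minor by contracting the colour classes indicated in the figure. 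You give figure-free arguments instead: your explicit $K_5$-subdivision in $F_2(C_5)$ (the five pairs of non-adjacent vertices form a $5$-cycle of degree-$4$ vertices, and the five pairs of adjacent vertices, each of degree $2$, subdivide the five diagonals) and in $F_2(K_{1,5})$ (each leaf pair $\{i,j\}$ has exactly the two neighbours $\{c,i\}$ and $\{c,j\}$) are both verifiable and correct, and for $F_3(K_{1,5})$ you replace the paper's $K_5$ minor by an Euler-formula count: the graph is $3$-regular on $20$ vertices with $30$ edges, bipartite because every edge joins a $c$-containing triple to a $c$-free triple, and your exclusion of $4$-cycles is sound, so girth at least $6$ gives $|E|\le \tfrac{3}{2}(|V|-2)=27<30$, contradicting planarity. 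What each route buys: the paper's verification is shorter but rests on inspecting drawings; yours is self-contained and checkable by hand, at the cost of the small structural checks (bipartiteness, absence of $4$-cycles, and the implicit fact that the graph contains a cycle so the girth bound applies, which is immediate from $3$-regularity).
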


\begin{proof}
Since $G$ contains as a minor a star graph $K_{1, 5}$ or a cycle graph $C_5$ and by Theorem~\ref{cor:minor}, it suffices to check that
$F_2(K_{1, 5})$, $F_3(K_{1, 5})$ and $F_2(C_5)$ are non-planar. As shown in Figures~\ref{fig:C5} and~\ref{fig:StarsToken},
$F_2(C_5)$ and $F_2(K_{1, 5})$ both contain a subdivision of $K_5$ and
thus are not planar. For the case of $F_3(K_{1, 5})$, note that by contracting the edges joining vertices of the same color in Figure \ref{fig:StarsToken}
 we obtain the complete graph $K_5$. Thus, $F_3(K_{1, 5})$ is non-planar and the proof is completed.
\end{proof}

\begin{figure}[h]
  \begin{center}
   \includegraphics[width=1.0\textwidth]{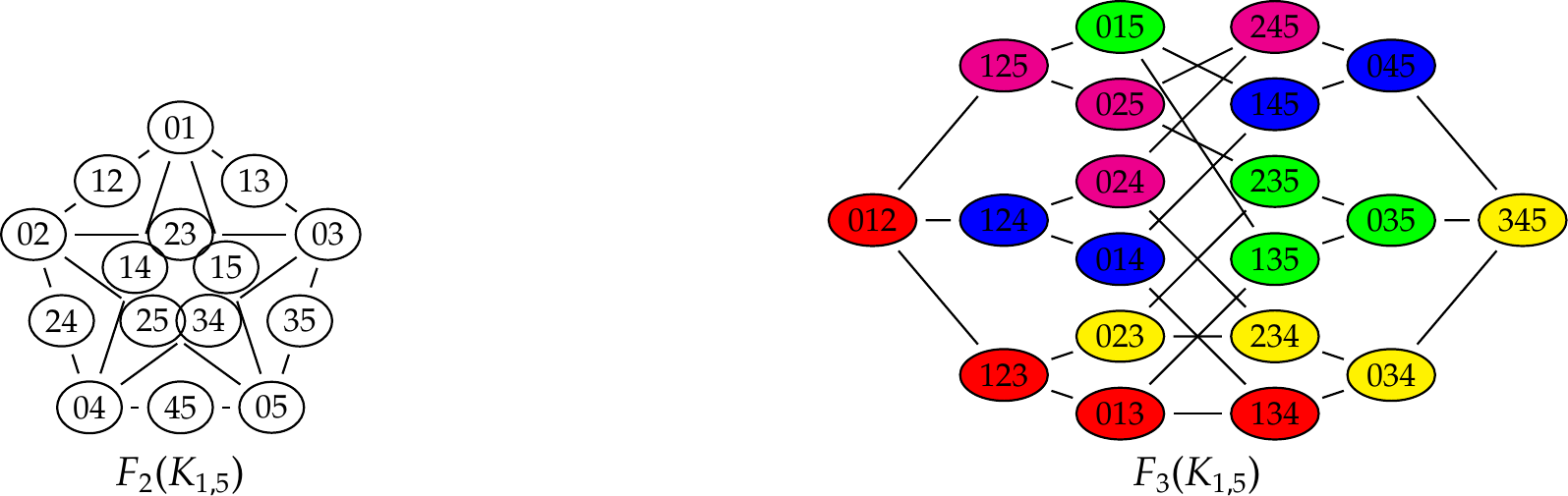}
\end{center}
\caption{The non-planar graphs $F_2(K_{1, 5})$ and $F_3(K_{1, 5})$ (both contain $K_5$ as a minor).} \label{fig:StarsToken}
\end{figure}

The following results shows that others token graphs not included in the previous theorems are non-planar, too.
In particular paths do not verify the hypotheses of previous theorems; however, many of their token graphs are non-planar.

The following two propositions are direct consequences of Theorem 4.1 in \cite{behz} and Theorem 10 in \cite{FFHH}.

\begin{proposition}\label{cor:planar1}
Let $G$ be a graph containing a path $P_3$ on three vertices as a subgraph and
let $2\le k\le n-2$. If $F_{k-1}(G-P_3)$ or $F_{k-2}(G-P_3)$ have maximum degree greater than $2$, then $F_k(G)$ is non-planar.
\end{proposition}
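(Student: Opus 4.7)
Write the embedded $P_3$ as $uvw$ and set $G':=G-P_3$ (so $V(G')=V(G)\setminus\{u,v,w\}$). The plan is to exhibit a subdivision of $K_{3,3}$ inside $F_k(G)$ and invoke Kuratowski's theorem. Both cases in the hypothesis are handled by essentially the same construction, so I will describe the first case in detail and then indicate the modifications needed for the second.

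Assume first that $\Delta(F_{k-1}(G'))>2$, and let $A$ be a vertex of $F_{k-1}(G')$ with three distinct neighbours $A_1,A_2,A_3$. Because each of $A,A_1,A_2,A_3$ is a $(k-1)$-subset of $V(G')$, the twelve sets
\[
X\cup\{x\},\qquad X\in\{A,A_1,A_2,A_3\},\ x\in\{u,v,w\},
\]
are pairwise distinct $k$-subsets of $V(G)$ and hence vertices of $F_k(G)$. Among them one finds two natural families of edges in $F_k(G)$: ``horizontal'' edges $X\cup\{u\}\sim X\cup\{v\}$ and $X\cup\{v\}\sim X\cup\{w\}$, arising from the edges $[u,v]$ and $[v,w]$ of the chosen $P_3$; and ``vertical'' edges $A\cup\{x\}\sim A_j\cup\{x\}$, arising from the $G'$-edge contained in the symmetric difference $A\triangle A_j$. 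Together these exhibit a copy of the Cartesian product of $K_{1,3}$ with $P_3$ inside $F_k(G)$.

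I now extract a $K_{3,3}$ subdivision from this copy. Take as branch vertices
\[
\{\,A\cup\{u\},\ A\cup\{v\},\ A\cup\{w\}\,\}\quad\text{and}\quad\{\,A_1\cup\{v\},\ A_2\cup\{v\},\ A_3\cup\{v\}\,\}.
\]
For each $j\in\{1,2,3\}$ there is a direct edge $A\cup\{v\}\sim A_j\cup\{v\}$, while $A\cup\{u\}$ is joined to $A_j\cup\{v\}$ by the length-$2$ path $A\cup\{u\}-A_j\cup\{u\}-A_j\cup\{v\}$, and similarly $A\cup\{w\}$ is joined to $A_j\cup\{v\}$ via $A_j\cup\{w\}$. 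The six internal vertices $A_j\cup\{u\}$ and $A_j\cup\{w\}$ ($j=1,2,3$) are pairwise distinct and disjoint from the branch set, so the nine paths are internally vertex-disjoint and supply a subdivision of $K_{3,3}$, which already forces $F_k(G)$ to be non-planar.

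For the second case, $\Delta(F_{k-2}(G'))>2$, the argument is identical after swapping the singletons $\{u\},\{v\},\{w\}$ for the three pairs $\{u,v\},\{u,w\},\{v,w\}$. A vertex $B$ of $F_{k-2}(G')$ of degree at least $3$ with neighbours $B_1,B_2,B_3$ yields twelve $k$-subsets $Y\cup Z$ with $Y\in\{B,B_1,B_2,B_3\}$ and $Z$ ranging over the three pairs; the horizontal path within each row is now $Y\cup\{u,v\}-Y\cup\{u,w\}-Y\cup\{v,w\}$, supported by the edges $[v,w]$ and $[u,v]$, and the remainder of the extraction is word-for-word the same. The only part of the bookkeeping that actually requires care is verifying that the twelve sets are distinct, that the ``vertical'' and ``horizontal'' edges do not identify or conflict, and that the six internal vertices of the $K_{3,3}$ subdivision avoid the branch set; this is precisely the content imported from Theorem~4.1 in \cite{behz} and Theorem~10 in \cite{FFHH}, and it is the only step where more than a glance is needed.
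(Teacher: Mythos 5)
Your construction is correct: the twelve sets you build do contain a copy of the Cartesian product of $K_{1,3}$ with the path $u\,v\,w$ inside $F_k(G)$, the nine listed paths are internally vertex-disjoint (each internal vertex $A_j\cup\{u\}$ or $A_j\cup\{w\}$ lies on exactly one path and avoids the branch set), and the analogous extraction in the $F_{k-2}$ case, with middle vertices $B_j\cup\{u,w\}$, goes through verbatim; Kuratowski's theorem then gives non-planarity. Where you differ from the paper is that the paper offers no explicit construction at all: it obtains the proposition as a direct consequence of Theorem~10 of \cite{FFHH} (the $k$-token graph of the disjoint union $P_3\cup(G-P_3)$, which is a subgraph of $G$, contains the Cartesian products $F_1(P_3)\,\square\,F_{k-1}(G-P_3)$ and $F_2(P_3)\,\square\,F_{k-2}(G-P_3)$, and $F_1(P_3)\simeq F_2(P_3)\simeq P_3$) combined with Theorem~4.1 of \cite{behz}, whose planarity characterization of Cartesian products implies that $P_3\,\square\,H$ is non-planar whenever $\Delta(H)\ge 3$. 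Your ``horizontal/vertical'' grid is precisely this product structure, and your explicit $K_{3,3}$ subdivision re-proves the one instance of the Behzad--Mahmoodian theorem that is needed, namely the non-planarity of $P_3\,\square\,K_{1,3}$; so your route is a self-contained, elementary unpacking of the citation-based argument, gaining independence from external results at the cost of the brevity the references provide. One small correction: your closing sentence attributes the distinctness and disjointness bookkeeping to the two cited theorems, but that is not what they supply --- they supply the product decomposition and its non-planarity --- and in fact your proof, as written, does not need either citation.
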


\begin{proposition}\label{cor:planar2}
Let $G$ be a graph and $2\le k\le n-2$. If $G$ contains two disjoint subgraphs isomorphic to $P_3$ and to $K_{1, 3}$ respectively, then $F_k(G)$ is non-planar.
\end{proposition}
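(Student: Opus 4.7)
The plan is to exhibit a copy of $P_3 \square K_{1,3}$ as a subgraph of $F_k(G)$. This suffices, because $P_3 \square K_{1,3}$ is non-planar: contracting each of the three fibers $V(P_3) \times \{\ell\}$ (one per leaf $\ell$ of the $K_{1,3}$-factor) to a single vertex exposes $K_{3,3}$ as a minor. This is also the prototypical non-planar Cartesian product and can be read off Theorem~4.1 of~\cite{behz}.

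Let $H_1 \cong P_3$ and $H_2 \cong K_{1,3}$ be the given vertex-disjoint subgraphs of $G$, and set $U := V(G) \setminus (V(H_1) \cup V(H_2))$, so $|U| = n - 7$. For any fixed $S \subseteq U$ and any positive integers $i \le 2$, $j \le 3$ with $i + j + |S| = k$, the family of $k$-subsets of the form $S \cup T_1 \cup T_2$ with $T_1 \in \binom{V(H_1)}{i}$ and $T_2 \in \binom{V(H_2)}{j}$ induces a subgraph of $F_k(G)$ isomorphic to $F_i(\langle V(H_1)\rangle) \square F_j(\langle V(H_2)\rangle)$: the symmetric difference of two adjacent such $k$-subsets must lie entirely in $V(H_1)$ or entirely in $V(H_2)$, since these vertex sets are disjoint. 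Since $H_1 \subseteq \langle V(H_1)\rangle$ and $H_2 \subseteq \langle V(H_2)\rangle$, this subgraph contains $F_i(P_3) \square F_j(K_{1,3})$ as a spanning subgraph; this disjoint-union embedding into token graphs is the content of Theorem~10 of~\cite{FFHH}.

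A direct computation gives $F_1(P_3) \cong F_2(P_3) \cong P_3$ and $F_1(K_{1,3}) \cong F_3(K_{1,3}) \cong K_{1,3}$, so any choice $(i,j) \in \{(1,1),\,(2,1),\,(1,3),\,(2,3)\}$ produces $F_i(P_3) \square F_j(K_{1,3}) \cong P_3 \square K_{1,3}$. It remains to realize the decomposition $k = i + j + \ell$ with $(i,j)$ in the above list and $\ell = |S| \in [0, n-7]$, i.e., to find $\ell \in \{k-5,\,k-4,\,k-3,\,k-2\} \cap [0, n-7]$. Since $2 \le k \le n-2$ and $n \ge 7$, this intersection is always nonempty, so a valid triple $(i,j,\ell)$ always exists and the embedding delivers the desired non-planar subgraph.

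The main obstacle is the matching step: one must certify that each of the sums $i+j \in \{2,3,4,5\}$ is realized by some pair in the distinguished list, and that for the extreme values $k = 2$ or $k = n-2$ (and for small $n \in \{7,8,9\}$, where $|U|$ is tiny) the intersection above still contains an integer. Both amount to short table checks; once dispatched, the Cartesian-product embedding together with the non-planarity of $P_3 \square K_{1,3}$ closes the argument.
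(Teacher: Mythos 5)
Your proposal is correct and follows essentially the same route the paper intends: the paper derives this proposition directly from Theorem~10 of~\cite{FFHH} (the Cartesian-product structure of token graphs of disjoint unions, which you re-derive by freezing $|S|$ tokens outside $V(H_1)\cup V(H_2)$) together with Theorem~4.1 of~\cite{behz} (non-planarity of $P_3 \square K_{1,3}$, which you also verify directly via a $K_{3,3}$ minor). Your bookkeeping with $(i,j)\in\{(1,1),(2,1),(1,3),(2,3)\}$ covering $i+j\in\{2,3,4,5\}$ correctly handles all $2\le k\le n-2$ (note the disjointness claim about symmetric differences really rests on $|T_1|=|T_1'|=i$ forcing $T_1\triangle T_1'$ to have even size, but the conclusion stands).
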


We now show that the non-trivial token graphs of all trees (with the exception $P_n$) of more than $10$ vertices  are non-planar.

\begin{theorem}\label{th:tree}
 Let $T$ be a tree of order $n >10 $ non-isomorphic to $P_n$. Then for every $2 \le k \le n-2$, the graph $F_k(T)$ is non-planar.
\end{theorem}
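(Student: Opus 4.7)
I would aim to exhibit inside $T$ two vertex-disjoint subgraphs isomorphic to $P_3$ and $K_{1,3}$, and then invoke Proposition~\ref{cor:planar2} to conclude that $F_k(T)$ is non-planar for every admissible $k$. The case $\Delta(T)\ge 5$ is already handled by Theorem~\ref{th:degree+cycle}, and since $T\not\cong P_n$ forces $\Delta(T)\ge 3$, only the two subcases $\Delta(T)=4$ and $\Delta(T)=3$ remain.

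The two easier subcases I would treat first. If $\Delta(T)=4$, pick any vertex $v$ of degree $4$; the four components of $T-v$ have total size $n-1\ge 10$, so some component $T_{*}$ has at least $\lceil 10/4\rceil=3$ vertices. Placing $K_{1,3}$ at $v$ using the three neighbours rooting the \emph{other} components leaves $T_*$ intact in the complement, and any tree on $\ge 3$ vertices contains a $P_3$. If $\Delta(T)=3$ and $T$ has a unique vertex $v$ of degree $3$, then $T$ is a spider whose three legs have total size $n-1\ge 10$; the longest leg has $\ge\lceil 10/3\rceil=4$ vertices, so deleting its first vertex (put into the $K_{1,3}$ at $v$) leaves a subpath on $\ge 3$ vertices as the desired $P_3$.

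When $\Delta(T)=3$ and $T$ has at least two degree-$3$ vertices, I would split on the distance between two of them. If there is a pair $v,w$ of such vertices with $d_T(v,w)\ge 2$, I place $K_{1,3}$ at $v$ and $P_3$ at $w$ using two neighbours of $w$ different from the (at most one) common neighbour of $v$ and $w$; tree-ness precludes any further overlap, since two vertices of a tree share at most one common neighbour. Otherwise every pair of degree-$3$ vertices is adjacent; as a tree is triangle-free, this collapses the set of degree-$3$ vertices to a single adjacent pair $\{v,w\}$, and all remaining vertices have degree $\le 2$. Consequently $T$ consists of the edge $vw$ together with two paths $P^v_1,P^v_2$ hanging off $v$ and two paths $P^w_1,P^w_2$ hanging off $w$, of total length $n-2\ge 9$.

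The main obstacle is this last configuration, which I would resolve by trying both natural $K_{1,3}$'s. The first is centred at $v$ and uses $w,u^v_1,u^v_2$; the second is centred at $w$ and uses $v,u^w_1,u^w_2$. If the first fails to leave a $P_3$ in the complement, then each surviving path-piece has $\le 2$ vertices, forcing $|P^v_i|\le 3$ and $|P^w_j|\le 2$; symmetrically, if the second fails, $|P^v_i|\le 2$ and $|P^w_j|\le 3$. If both fail simultaneously, the combined constraints yield $|P^v_i|\le 2$ and $|P^w_j|\le 2$, whence $\sum_i|P^v_i|+\sum_j|P^w_j|\le 8$, contradicting $n-2\ge 9$. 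Hence one of the two choices produces the required disjoint $P_3+K_{1,3}$, and Proposition~\ref{cor:planar2} completes the proof.
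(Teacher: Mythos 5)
Your proof is correct and follows the same overall strategy as the paper's: eliminate $\Delta(T)\ge 5$ via Theorem~\ref{th:degree+cycle}, then in the remaining cases exhibit vertex-disjoint copies of $P_3$ and $K_{1,3}$ and invoke Proposition~\ref{cor:planar2}. What differs is the internal decomposition, and the differences are to your advantage. Your $\Delta=4$ case is cleaner (a single component-counting argument at the degree-$4$ vertex, with no sub-split on whether some other vertex has degree $\ge 3$), and in the $\Delta=3$ case with several degree-$3$ vertices you split by distance rather than by membership in $N[v]$, using the fact that two vertices of a tree have at most one common neighbour. The most substantive difference is the terminal configuration (exactly two adjacent degree-$3$ vertices $v,w$, all other degrees at most $2$): the paper simply asserts that $n>10$ forces a $P_3$ inside $\overline{N[v]\cup N[w]}$, but a pigeonhole count shows this is only guaranteed once $n\ge 15$ — with four hanging paths of at most three vertices each one can build trees with $11\le n\le 14$ having no $P_3$ outside $N[v]\cup N[w]$ — so the paper's sentence is slightly too strong as stated (the theorem survives because the $P_3$ can be relocated). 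Your argument of trying both stars centred at $v$ and at $w$ and showing that simultaneous failure bounds all four hanging paths by $2$, hence $n\le 10$, handles exactly these values of $n$ and is therefore both correct and tighter than the paper's treatment of that subcase.
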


\begin{proof}
Let $2 \le k \le n-2$. Since $T\not\simeq P_n$ we have $\D \geq 3$. The case when $T$ has maximum degree $\D \geq 5$ follows by Theorem \ref{th:degree+cycle}. We consider the cases $\D=4$ and $\D=3$ separately.

Suppose that $\D=4$.
Consider a vertex $v\in V(T)$ with $d(v)=4$. Note that if there exists a vertex $u\in V(T)\setminus\{v\}$ with $d(u)\ge3$ then we have a subgraph $P_3\subset N[u]\setminus\{v\}$,
and so $F_k(T)$ is non-planar by Proposition \ref{cor:planar2}. Now, if $d(u)\le2$, for every $u\in V(T)\setminus\{v\}$, and since $n > 10$, then there are vertices $u_1,u_2,u_3$ in $V(T)$ such that the distance $d(v,u_i)=i$ for $i=1,2,3$ with $\langle \{u_1,u_2,u_3\} \rangle \simeq P_3$.
Therefore, Proposition \ref{cor:planar2} implies that $F_k(T)$ is non-planar.

Suppose that $\D=3$.
Consider a vertex $v\in V(T)$ with $d(v)=3$ and let $N(v)=\{v_1,v_2,v_3\}$.
Suppose first that no other vertex of $T$ has degree $3$.
Since $n > 10$, there is a path $P_3$ in $V(T)\setminus N[v]$ and so Proposition \ref{cor:planar2} gives that $F_k(T)$ is non-planar.
Suppose now that there are at least two vertices in $V(T)$ with degree $3$.
Note that if there is a vertex $u\in V(T)\setminus N[v]$ with $d(u)=3$, then there is a path $P_3$ in $N[u]\setminus N[v]$, and so,
Proposition \ref{cor:planar2} gives that $F_k(T)$ is non-planar.
Thus we can assume that there is no $u\in V(T)\setminus N[v]$ with $d(u)=3$.
Note that if $d(v_1)=d(v_2)=3$ then Proposition \ref{cor:planar2} gives that $F_k(T)$ is non-planar taking a $P_3$ in $N[v_1]\setminus N[v_2]$.
Thus, we can assume without loss of generality that $d(v)=d(v_1)=3$ and $d(u)\le2$ for $u\in V(T)\setminus\{v,v_1\}$.
Since $n > 10$, there exists a $P_3$ in $\overline{N[v]\cup N[v_1]}$, and so, Proposition \ref{cor:planar2} gives that $F_k(T)$ is non-planar.
\end{proof}

Note that the $2$-token graph $F_2(P_n)$ of every path graph $P_n$ with $n$ vertices is planar, see \cite[Figure 1]{FFHH}.
However, the following result shows that the token graph of $P_n$ is non-planar for $3 \le k\le n-3$ and $n\ge 7$.

\begin{theorem}\label{th:path}
Let $G$ be a graph and $3\le k\le n-3$. If $G$ contains a path with $7$ vertices, then $F_k(G)$ is non-planar.
\end{theorem}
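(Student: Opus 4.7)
My plan is to reduce Theorem~\ref{th:path} to the single combinatorial assertion that $F_3(P_7)$ is non-planar, and then to obtain the general case by combining two ingredients: the isomorphism $F_k(G)\simeq F_{n-k}(G)$ (from $A\mapsto V(G)\setminus A$), and an embedding obtained by \emph{fixing} $k-3$ \emph{tokens outside the copy of} $P_7$.

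First, by $F_k(G)\simeq F_{n-k}(G)$, we may assume $3\le k\le n/2$. Let $Q$ be a path on $7$ vertices inside $G$, and choose any $S\subseteq V(G)\setminus V(Q)$ with $|S|=k-3$; such an $S$ exists because $k\le n/2$ together with $n\ge 7$ gives $k-3\le n-7=|V(G)\setminus V(Q)|$, and in the boundary case $n=7$, $k=3$ we simply take $S=\emptyset$. Consider the collection
\[
\mathcal{T}:=\big\{S\cup T : T\subseteq V(Q),\ |T|=3\big\}\subseteq V(F_k(G)),
\]
a set of $\binom{7}{3}=35$ vertices. For any two of them, $S\cup T$ and $S\cup T'$, we have $(S\cup T)\triangle(S\cup T')=T\triangle T'\subseteq V(Q)$, so $S\cup T\sim S\cup T'$ in $F_k(G)$ whenever $T\sim T'$ in $F_3(Q)$. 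Hence $F_k(G)$ contains $F_3(Q)\simeq F_3(P_7)$ as a subgraph, and since every subgraph of a planar graph is planar, it suffices to prove that $F_3(P_7)$ is non-planar.

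The main obstacle is therefore to establish non-planarity of $F_3(P_7)$. The earlier propositions are no help: $P_7$ has no $K_{1,3}$ subgraph, so Proposition~\ref{cor:planar2} is inapplicable; and a direct check over the five possible choices of $P_3\subseteq P_7$ shows that both $F_1(P_7-P_3)$ and $F_2(P_7-P_3)$ always have maximum degree at most $2$, ruling out Proposition~\ref{cor:planar1}. One is therefore forced to exhibit a non-planar topological subgraph of $F_3(P_7)$ explicitly. I would work with the ``gap-vector'' description of $F_3(P_7)$: its vertices are the tuples $(g_1,g_2,g_3,g_4)\in\mathbb{Z}_{\ge 0}^4$ with $g_1+g_2+g_3+g_4=4$, and its edges are the $(+1,-1)$ transfers between consecutive coordinates. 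Within this $35$-vertex graph I would locate a $K_{3,3}$ subdivision, taking as branch vertices three ``left-heavy'' triples such as $\{1,2,4\},\{1,3,5\},\{1,4,6\}$ and three ``right-heavy'' mirrors such as $\{4,6,7\},\{3,5,7\},\{2,4,7\}$, joined by nine internally disjoint token-moving paths. Certifying the vertex-disjointness of the nine paths is the technical heart of the argument and is most cleanly presented by a figure.
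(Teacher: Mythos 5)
Your reduction to the non-planarity of $F_3(P_7)$ is sound and essentially parallels the paper's: you fix $k-3$ tokens outside the path after invoking $F_k(G)\simeq F_{n-k}(G)$ to get $k\le n/2$, while the paper instead fixes $k-4$ tokens and uses $F_4(P_7)\simeq F_3(P_7)$, which covers $k=n-3$ without complementation; either variant is fine. The problem is the crux: you never actually prove that $F_3(P_7)$ is non-planar. Naming six candidate branch vertices for a $K_{3,3}$ subdivision and deferring the nine internally disjoint paths (and their disjointness check) to an unspecified figure is a plan, not a proof; as you yourself note, that verification is ``the technical heart of the argument,'' and it is missing.

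Moreover, your reason for abandoning Proposition~\ref{cor:planar1} is a computational error. Taking the $P_3$ to be an end segment $(v_1,v_2,v_3)$ of $P_7$, one has $P_7-P_3\simeq P_4$, and $F_2(P_4)$ has maximum degree $3$, not at most $2$: the vertex $\{v_1,v_3\}$ of $F_2(P_4)$ is adjacent to $\{v_2,v_3\}$, $\{v_1,v_2\}$ and $\{v_1,v_4\}$. Hence Proposition~\ref{cor:planar1} (with $k=3$, so that $F_{k-1}(P_7-P_3)=F_2(P_4)$) applies directly and yields the non-planarity of $F_3(P_7)$ in one line; this is precisely how the paper concludes. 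So the route you declared impossible is in fact the intended one, and without it (or a completed explicit $K_{3,3}$ subdivision) your argument has a genuine gap exactly at the statement everything else hinges on.
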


\begin{proof}
Let $P_7:=(v_1,\ldots,v_7)$ be a path of seven vertices in $G$. We first show that $F_k(G)$ contains
$F_3(P_7)$ as a subgraph. This follows immediately if $k=3$. Thus, assume that $k\ge 4$.
Fix $k-4$ tokens at vertices in $\overline{V(P_7)}$ and let $H$ be the subgraph
of $F_k(G)$ that results from moving the remaining four
tokens freely. Then, $H$ contains $F_4(P_7)$ as a subgraph, but $F_4(P_7) \simeq F_3(P_7)$.
Thus $F_3(P_7)$ is a subgraph of $F_k(G)$ as claimed. Now, by Proposition \ref{cor:planar1} and the fact
that maximum degree of $F_2(P_4)$ is three, $F_3(P_7)$ is non-planar.
The result follows.
\end{proof}

With Theorems \ref{th:degree+cycle}, \ref{th:tree} and \ref{th:path} we are ready to prove Theorem~\ref{cor:tree}.

\begin{customthm}{2}
Let $G$ be a connected graph of order $n>10$ and let $2 \le k \le n-2$ be an integer. Then $F_k(G)$ is planar if and only if
$k=2$ or $k=n-2$, and $G \simeq P_n$.
\end{customthm}

\begin{proof}
The proof follows from the fact that any connected graph which is non-isomorphic to neither $P_n$ nor $C_n$ has an spanning tree
non-isomorphic to $P_n$. Thus, Theorem \ref{th:tree} gives the result if $G\not\simeq C_n$. However, if $G\simeq C_n$
then Theorem \ref{th:degree+cycle} gives the result.
\end{proof}


\subsection{Graphs of small order}\label{sec:small}

The only one non-trivial token graph of $K_4$ is $F_2(K_4)$, it is isomorphic to the octahedral graph, which is planar. Therefore, all token graphs
 of graphs with at most four vertices are planar.
 Theorem~\ref{cor:tree} implies that all the non-trivial token graphs of a connected graph not isomorphic $P_n$ of more than
 ten vertices are non-planar.
 Thus, the graphs of more than four and at most ten vertices, whose non-trivial token graphs are planar, remain to be found.
 Note that if $G'$ is a subgraph of $G$ then $F_k(G')$ is a subgraph of $F_k(G)$.
 Therefore, it is sufficient to search for the connected graphs edge-maximal with the property that
 their $k$-token graphs are planar.

 For each value of $k$, we did an exhaustive search for these graphs as follows. Since
 $F_k(G)\simeq F_{n-k}(G)$, we considered only those graphs of order at least $2k$.
 Using \texttt{nauty}~\cite{nauty} we generated every connected graph on $n \ge 2k$ vertices
 and $m$ edges. We started our search at $m=n-1$; afterwards, we increased $m$ by one. We stopped as soon
 as all graphs of $n$ vertices and $m$ edges have non-planar $k$-token graphs.
 For a given graph $G$ we tested whether its $k$-token graph is planar and
 whether the addition of any new edge to $G$ produces a graph whose $k$-token graph is non-planar.
 To check for planarity we used \texttt{sage}~\cite{sage}, which in turn uses Boyer's
 implementation of~\cite{boyer}. We found $13$ connected graphs edge-maximal with the property that their $2$-token graphs are planar;
 these are shown in Figure~\ref{fig:M_2}.  For $k=3$  we found the two graphs of six vertices shown on Figure~\ref{fig:M_3_6}.
 All $3$-token graphs of connected graphs with seven or more vertices are non-planar. For $k\ge 4$,  all connected
 graphs of $2k$ or more vertices have non-planar $k$-token graphs.

\section*{Acknowledgments}

The authors would like to thank the anonymous referee for his/her useful suggestions. W. C. was partially supported by the Spanish Ministry of Economy and Competitiveness through project MTM2013-46374-P. J. L.
was partially supported by CONACyT Mexico grant 179867 and by European Union and Republic of Slovenia through the grant
"Internationalization as the pillar of development of University of Maribor". L. M. R. was partially supported by PROMEP grant UAZ-CA-169 and PIFI (Mexico).

\begin{figure}
  \begin{center}
   \includegraphics[width=0.9\textwidth]{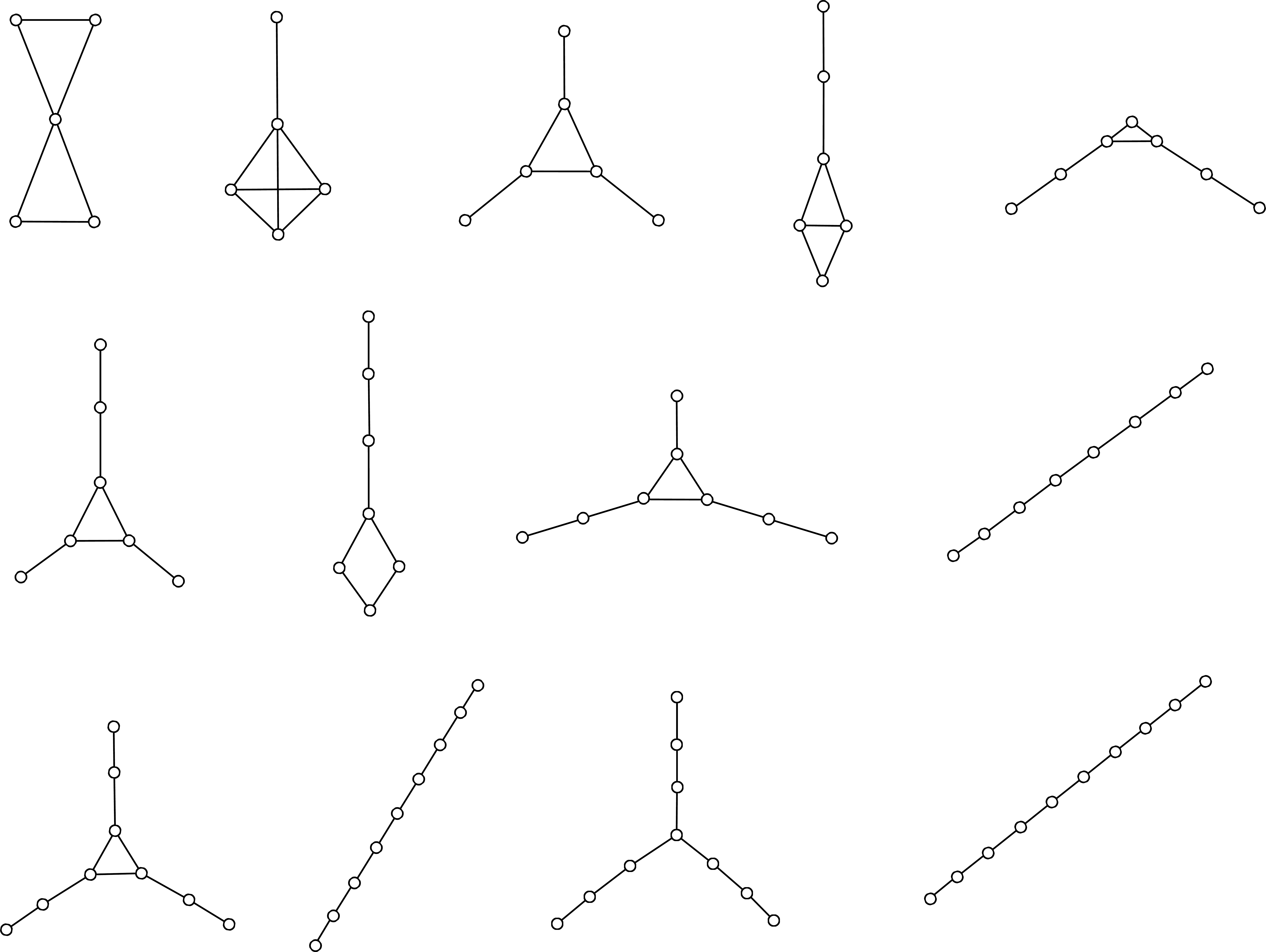}
\end{center}
\caption{The connected edge-maximal graphs on $5,\dots,10$ vertices whose $2$-token graphs are planar.} \label{fig:M_2}
\end{figure}
\begin{figure}
  \begin{center}
   \includegraphics[width=0.8\textwidth]{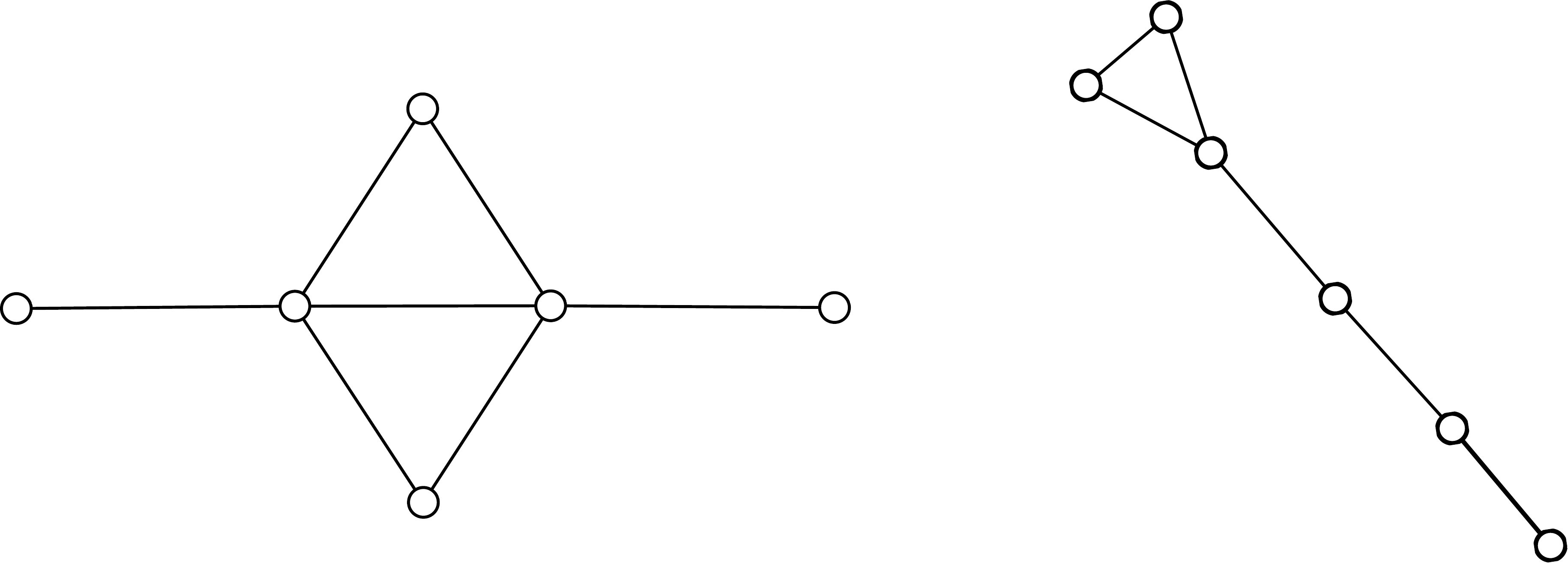}
\end{center}
\caption{The connected edge-maximal graphs on $6$ vertices whose $3$-token graphs are planar.} \label{fig:M_3_6}
\end{figure}

\end{document}